\DeclareMathAlphabet{\mathpzc}{OT1}{pzc}{m}{it}
\numberwithin{equation}{section}
\def\eqnarray{\stepcounter{equation}\let\@currentlabel=\theequation
	\global\@eqnswtrue
	\tabskip\@centering\let\\=\@eqncr
	$$\halign to \displaywidth\bgroup\hfil\global\@eqcnt\z@
	$\displaystyle\tabskip\z@{##}$&\global\@eqcnt\@ne
	\hfil$\displaystyle{{}##{}}$\hfil
	&\global\@eqcnt\tw@ $\displaystyle{##}$\hfil
	\tabskip\@centering&\llap{##}\tabskip\z@\cr}
\def\endeqnarray{\@@eqncr\egroup
	\global\advance\c@equation\m@ne$$\global\@ignoretrue}
\newtheorem{theorem}{Theorem}[section]
\newtheorem{lemma}[theorem]{Lemma}
\numberwithin{equation}{section}
\title{ Bilevel Inverse Problems in Neuromorphic Imaging} 
\date{\today}
\thanks{
	This work is partially supported by NSF grants DMS-2110263, DMS-1913004, the Air Force Office of Scientific Research (AFOSR) under Award NOs: FA9550-22-1-0248 and FA9550-19-1-0036. 
}
\author{Harbir Antil}
\address{H. Antil. The Center for Mathematics and Artificial Intelligence
	and Department of Mathematical Sciences, George Mason University,
	Fairfax, VA 22030, USA.}
\email{hantil@gmu.edu}
\author{David Sayre}
\address{D. Sayre. The Center for Mathematics and Artificial Intelligence
	and Department of Mathematical Sciences, George Mason University,
	Fairfax, VA 22030, USA.}
\email{dsayre@gmu.edu}
\begin{document}				
	
	\begin{abstract}	
		Event or Neuromorphic cameras are novel biologically inspired sensors that record data 
		based on the change in light intensity at each pixel asynchronously. They have a temporal 
		resolution of microseconds.  This is useful for scenes with fast moving objects that can 
		cause motion blur in traditional cameras, which record the average light intensity over 
		an exposure time for each pixel synchronously. This paper presents a bilevel
		inverse problem framework for neuromorphic imaging. Existence of solution
		to the inverse problem is established. Second order sufficient conditions are derived
		under special situations for this nonconvex problem. A second order Newton type solver
		is derived to solve the problem. The efficacy of the approach is shown on several examples.
	\end{abstract}

	\keywords{Bilevel optimization, neuromorphic imaging, existence of solution, second order sufficient conditions}
	\subjclass[2010]{
		65K05, 
		90C26, 
		90C46,  
		49J20  
	}
	
	\maketitle

	\section{Introduction} \label{sec:Intro}
	
	Event (Neuromorphic) cameras are novel biologically inspired sensors that record data based on the change in light intensity at each pixel asynchronously \cite{Gallego_survey}. If the change in light intensity at a given pixel is greater than a preset threshold then an event is recorded at that pixel.  For this reason if there is no change to the scene, be that movement or brightening/dimming of a light source, no events will be recorded. In contrast if a scene is dynamic from camera movement or from the movement of an object in the scene then each pixel of the event camera will record intensity changes with a temporal resolution on the order of microseconds \cite{Gallego_survey}. This logging of events results in a non-redundant stream of events through the time dimension for each pixel \cite{DBLP:journals/corr/abs-1811-00386}. The stream of data is exceptionally useful for scenes with fast moving objects that can cause motion blur in traditional cameras, which record the average light intensity over an exposure time for each pixel synchronously \cite{pan2019bringing}. Figure \ref{f:nucamera} gives an illustration of the differences between traditional and event cameras. 
	\begin{figure}[htb]
		\centering
	\includegraphics[width=.5\textwidth]{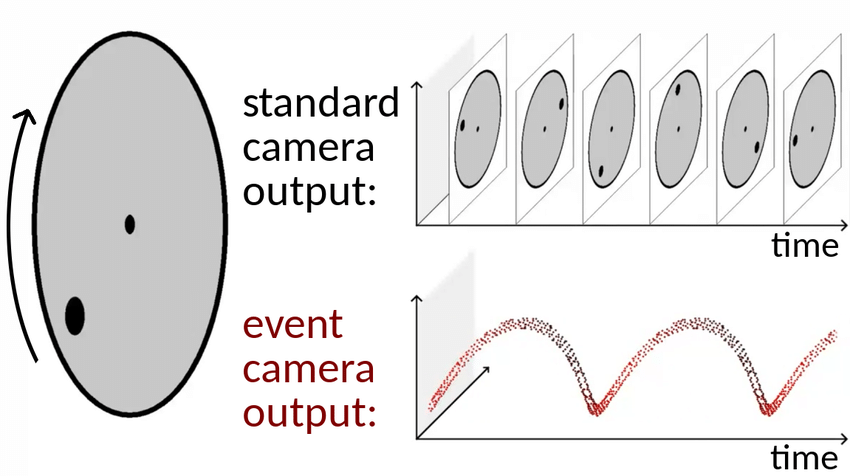}
		\caption{Comparison of the output of a standard frame-based camera and an 
			event camera when facing a black dot on a rotating disk. The standard camera 
			outputs frames at a fixed rate, thus sending redundant information when there 
			is no motion in the scene \cite{Gehrig_2018}.}
		\label{f:nucamera}
	\end{figure}
	
	The prior work, \cite{pan2019bringing}, proposed the Event-based Double Integral (EDI) and multiple Event-based Double integral (mEDI) algorithms to address motion blur for the underlying inverse problem. The EDI model utilizes a single blurry standard camera image, along with associated event data, to generate one or more clear images through the process of energy minimization. The mEDI model, an extension of the EDI model, utilizes multiple images in conjunction with event data to produce unblurred images. The EDI and mEDI models were an extension of the work from \cite{DBLP:journals/corr/abs-1811-00386} where a continuous-time model was initially introduced. While these algorithms represent a leap forward over earlier work, questions remain about the validity of the models in various scenarios (see below), regarding what situations solutions exist and what optimization procedures are appropriate in order to find such solutions. 
	In this paper we provide a mathematical foundation and further extensions to
	the previous methods by extending the model and providing a bilevel inverse 
	problem framework. Indeed, as shown below, the EDI and mEDI models 
	correspond to inverse problems in image deblurring \cite{arridge2019solving,bertero2021introduction}.
	In particular, 
	\begin{enumerate}
		\item We introduce a bilevel optimization problem that allows for the simultaneous 
		optimization over a desired number of frames. We also illustrate that this 
		optimization problem is indeed an inverse problem. Therefore, we use the terms
		`optimization' and `inverse', interchangeably.
		\item We establish existence of solution to this problem. Under certain conditions, we derive the 
		second order sufficient conditions and provide local uniqueness of solution to these nonconvex 
		problems.
		\item A fully implementable framework based on second order methods has been developed.
		The benefits of the proposed approach are illustrated using multiple numerical examples.
	\end{enumerate}
	For completeness, we emphasize that the bilevel approaches to search for the hyperparameters 
	in traditional imaging is not new, see for instance \cite{HAntil_ZDi_RKhatri_2020a,MR3592840}.
	However, the problem considered in this paper is naturally bilevel without having to search for the hyperparameters.
	
	\medskip
	\noindent 
	{\bf Outline:}
	The remainder of the paper is organized as follows. 
	Section~\ref{s:not} focuses on some notation and preliminary results.
	In Section~\ref{s:ebc}, we 
	discuss the basics of event based cameras. Section~\ref{s:existing} focuses on
	existing models from \cite{pan2019bringing} which serves as a foundation for
	the proposed bilevel optimization based variational model in Section~\ref{s:Opt} . Existence of 
	solution to the proposed optimization problem is shown in Theorem~\ref{thm:exist}. 
	Moreover, local convexity of our reduced functional is shown in Theorem~\ref{thm:interval}. 
	Section~\ref{s:numres} first focuses on how to prepare the data for the algorithm.
	It then discuss implementation details. The problem itself is solved using a second order
	Newton based method. Subsequently, several illustrative numerical examples are provided
	in Section~\ref{s:nex} .

	\section{Notation and Preliminaries}
	\label{s:not}
	
	Let $n_t$ represent the number of images and $n_x \times n_y$ denotes the 
	number of pixels per image. We use $\bm{U}\in \mathbb{R}^{n_t \times n_x \times n_y}$ 
	to denote a tensor. We use the vector 
	\[
	\bm{u}_{xy} \in \mathbb{R}^{n_t}, 
	\]
	to represent the $(x,y)$ pixel value of $\bm{U}$ for all times. 
	Moreover, we use the matrix 
	\[
	\bm{u}^{i} \in \mathbb{R}^{n_x \times n_y} . 
	\]
	to represent an image of size $\mathbb{R}^{n_x \times n_y}$  at a fixed time
	instance $i \in \{1,\dots, n_t\}$.
	We use the scalar
	\[
	\bm{u}^i_{xy} \in \mathbb{R} 
	\]
	to denote the value of $\bm{U}$ at a pixel location $(x,y)$ and time $i \in \{ 1, \dots, n_t \} $.
	Graphical representations are given in Figure \ref{f:ntn} . 
	Finally, given quantities $\bm{u}_{xy}$ and $\bm{M} \in\mathbb{R}^{{n_t}\times{\eta}}$ for some $\eta \in \mathbb{N}$, we define the operation $\langle \cdot , \cdot \rangle$ as:
	\[ \langle \bm{u}_{xy},\bm{M} \rangle := (\bm{u}_{xy})^\top \bm{M} \in \mathbb{R}^{\eta}.\] 
	 \begin{figure}[htb]
	 	\centering
	 	\includegraphics[width=0.6\textwidth]{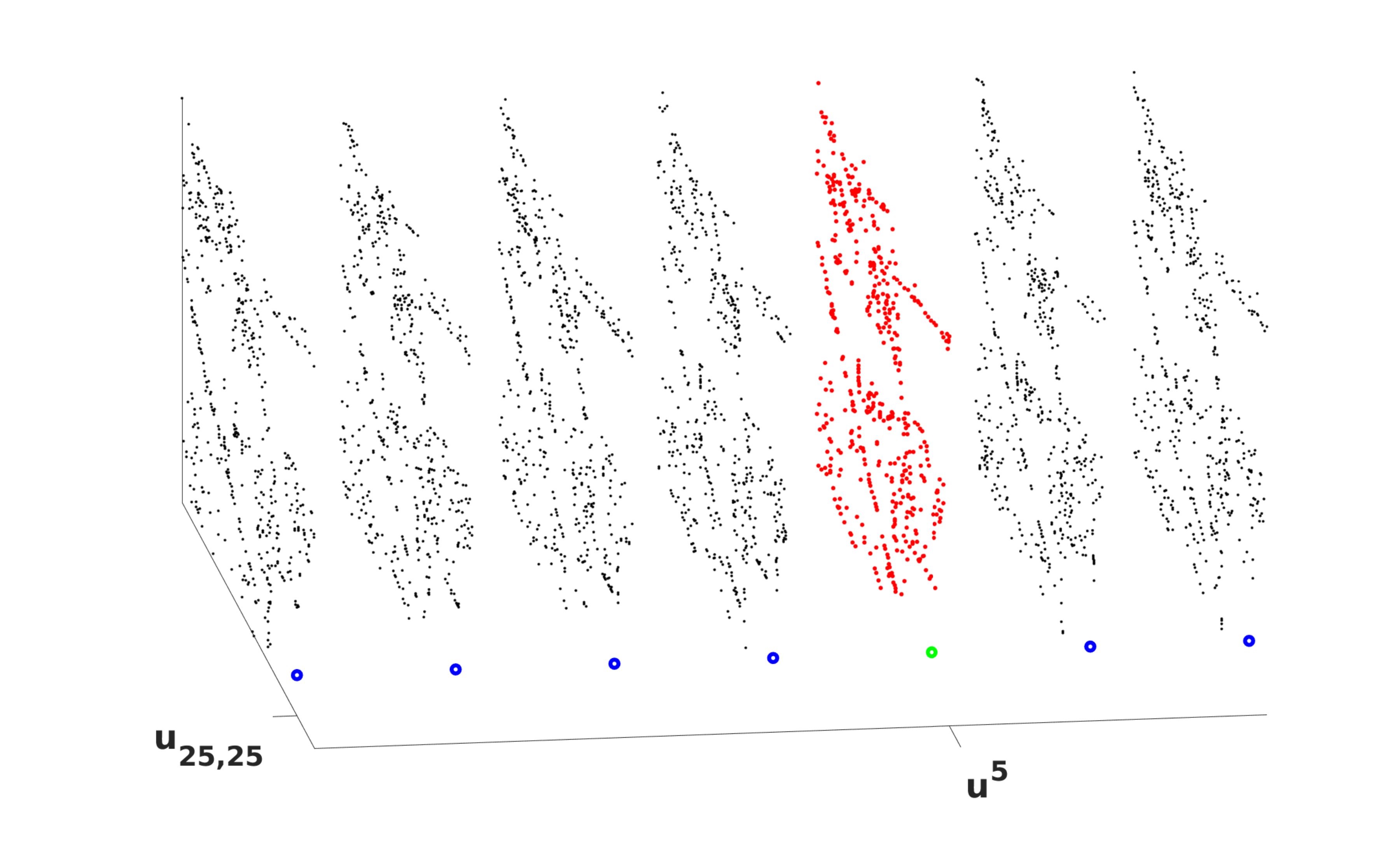}
	 	\caption{
			\label{f:ntn}
			The panel describes multiple images one of those images for instance is 
	 		$\bm{u}^i \in \mathbb{R}^{n_x \times n_y}$ and it is marked in red color. The 
	 		circles represent the vector $\bm u_{xy} \in \mathbb{R}^{n_t}$ 
	 		and are marked in blue color. Green circle indicates an overlap between 
	 		$\bm{u}^i$ and $\bm u_{xy}$.
	 	}
	 \end{figure}
	
	The remainder of the section is organized as follows. First in Section~\ref{s:ebc}, 
	we discuss the basic working principle behind the neuromorphic cameras. 
	The main ideas behind the algorithms presented in \cite{pan2019bringing} are 
	described in Section~\ref{s:existing}. We briefly discuss some potential limitations 
	of this approach which motivates our algorithm in Section~\ref{s:Opt}.

	\subsection{Neuromorphic (Event Based) Cameras}\label{s:ebc}
	
	\subsubsection{Neuromorphic Camera Basics}\label{sec:Basics}

	Neuromorphic cameras are composed of independent pixels that detect light intensity changes 
	in the environment as they occur. Light intensity is sampled $\mathcal{O}(\mu s)$ and  events 
	are logged if the intensity of the light is beyond a preset hardware defined threshold, $c$.
	We denote the instantaneous light intensity at pixel location $(x,y)$ at time $s$ by 
		$\bm{q}^{s}_{xy} \in \mathbb{R}$. 
	When the light intensity detected by the camera exceeds the threshold for a given pixel located at $(x,y)$, 
	at a given time $s$, an event is logged. 
	Then the reference intensity, $\bm{q}^{s_{ref}}_{xy}$, for the pixel located at $(x,y)$ is updated. Due to the high rate of sampling, 
	and independent nature of the camera's pixels neuromorphic cameras are less susceptible to 
	exposure issues, as well as image blurring. Figure~\ref{f:nucamera} illustrates the difference in the data recorded between traditional and event based cameras.

	\subsubsection{Data representation}
	The output of an event based camera over some time interval, $\mathcal{I}$, is of the form $(s,x,y,p)$, with the following definitions: 
	\begin{enumerate}[$\bullet$]
		\item $s \in \mathcal{I} = \{s_1,s_2, \dots, s_n \, : \, s_1 < s_2 < \dots < s_n \}$,  	 	
		where $\mathcal{I}$ represents the exposure interval, each $s_j$ represents a discrete time at which an 
		event occurred and $n$ represents the total number of events recorded during the interval, $\mathcal{I}$. 
		When discussing methods that involve multiple images, we will denote the exposure interval for the 
		$i$-$th$ image as $\mathcal{I}_i$. We will also denote the set of events associated to each image $i$ as 
			$\{ s^i_j\}_{j=1}^{n_i}$. With this notation we have $i = 1, \dots, n_t$ and $j = 1, \dots, n_i$ with $n_i$ representing 
		the total number of events associated to image $i$. 
		
		\item $x,y$ represent pixel coordinates.  
		\item $p_{x,y}^{s_j} := \begin{cases}
		+1, & \log \left( \frac{\bm{q}^{s_j}_{xy}}{\bm{q}^{s_{ref}}_{xy}}\right) \geq c\\
		-1, & \log \left( \frac{\bm{q}^{s_j}_{xy}}{\bm{q}^{s_{ref}}_{xy}}\right) \leq -c \, 
		\end{cases}$ is the polarity of the event. 
		An increase in light intensity above a threshold, $c>0$, we regard as a polarity 1 
		and a decrease of intensity as a polarity shift of $-1$. No event is logged if 
		$p_{x,y}^{s_j} \in (-c,c)$.
		
	\end{enumerate}
	We can reformulate these events into a datacube with the following definition: 
	\begin{equation}\label{eq:cube}	
	{\bm{EC} }_{xy}^{s_j} = p_{x,y}^{s_j}, \quad j = 1, \dots, n ,
	\end{equation}
	which results in a sparse 3-dimensional matrix with entries of $0,1,-1$.
	We refer to Figure \ref{f:ex1} for 2D and 3D examples of the datacube.
	Representing the data in this way allows us to track intensity changes over 
	time on a per pixel basis.
	
	\subsection{Inverse problems in Image Deblurring} \label{s:inv}
	Image deblurring is a classic example of an inverse problem, where the goal is to reconstruct the unknown original image from a degraded, blurred observation. In image deblurring, the degradation process can be modeled as a interaction between the original image and a blur kernel which represents the blur caused by various factors such as movement \cite{DBLP:journals/corr/abs-2201-10522,zhang2022new}.
	
	In the context of image deblurring, the field can be divided into two categories, blind and non-blind, based on the amount of information known about the blur kernel \cite{almeida2009blind,satish2020comprehensive,tang2014non,xie2019non,DBLP:journals/corr/abs-2201-10522}. In the non-blind case, the blur kernel is known which leads to a simpler ableit ill-posed inverse problem \cite{tang2014non,xie2019non}. In the blind case, no information about the blur kernel is available, making the deblurring problem more challenging as the blur kernel must be estimated from the degraded image in addition to the original image \cite{satish2020comprehensive,DBLP:journals/corr/abs-2201-10522}. The semi-blind case is a particular instance of a blind image deblurring problem in that some but not all information about the blur kernel is available \cite{buccini2018semiblind,morin2013semi,park2012semi}. The problems under consideration in this project are of semi-blind type.
	
	\subsection{Existing Model, Algorithm, and Limitations}\label{s:existing}
	The approach discussed in this paper is motivated by \cite{pan2019bringing}. The
	article \cite{pan2019bringing} seeks to find latent un-blurred images using 
	the Event Based Double Integral (EDI) and multi-Event Based Double Integral (mEDI) 
	models. We briefly discuss this next. This will be followed by our new proposed models.
	
	\subsubsection{EDI Model}
	
	In \cite{pan2019bringing}, the discrete events outlined in Section \ref{sec:Basics} are used to  
	generate a continuous function, $\bm{e}_{xy}$, for each $(x,y)$ pixel location. This is done by 
	generating a series of continuous unit bump functions centered at each $s_j$: 
	$\phi_{s_j}(t)$. Then we can define $\bm{e}_{xy} : \mathbb{R} \rightarrow \mathbb{R}$ as:
	\begin{equation}\label{eq:exy}
	\bm{e}_{xy}(t) := \sum_{s_j \in \; \mathcal{I}} {\bm{EC}}_{xy}^{s_j} \cdot  \phi_{s_j}(t) \in \mathbb{R} .
	\end{equation}
	We graphically illustrate building this function for a single pixel below with the following string of events:
	\[
	[1,0,1, -1, 1].
	\] 
	In Figure \ref{f:stemplot} we see the function $\bm{e}_{xy}$ overlayed onto a stem plot of events.
	\begin{figure}[!htb]
		\centering
		\includegraphics[width=0.6\textwidth]{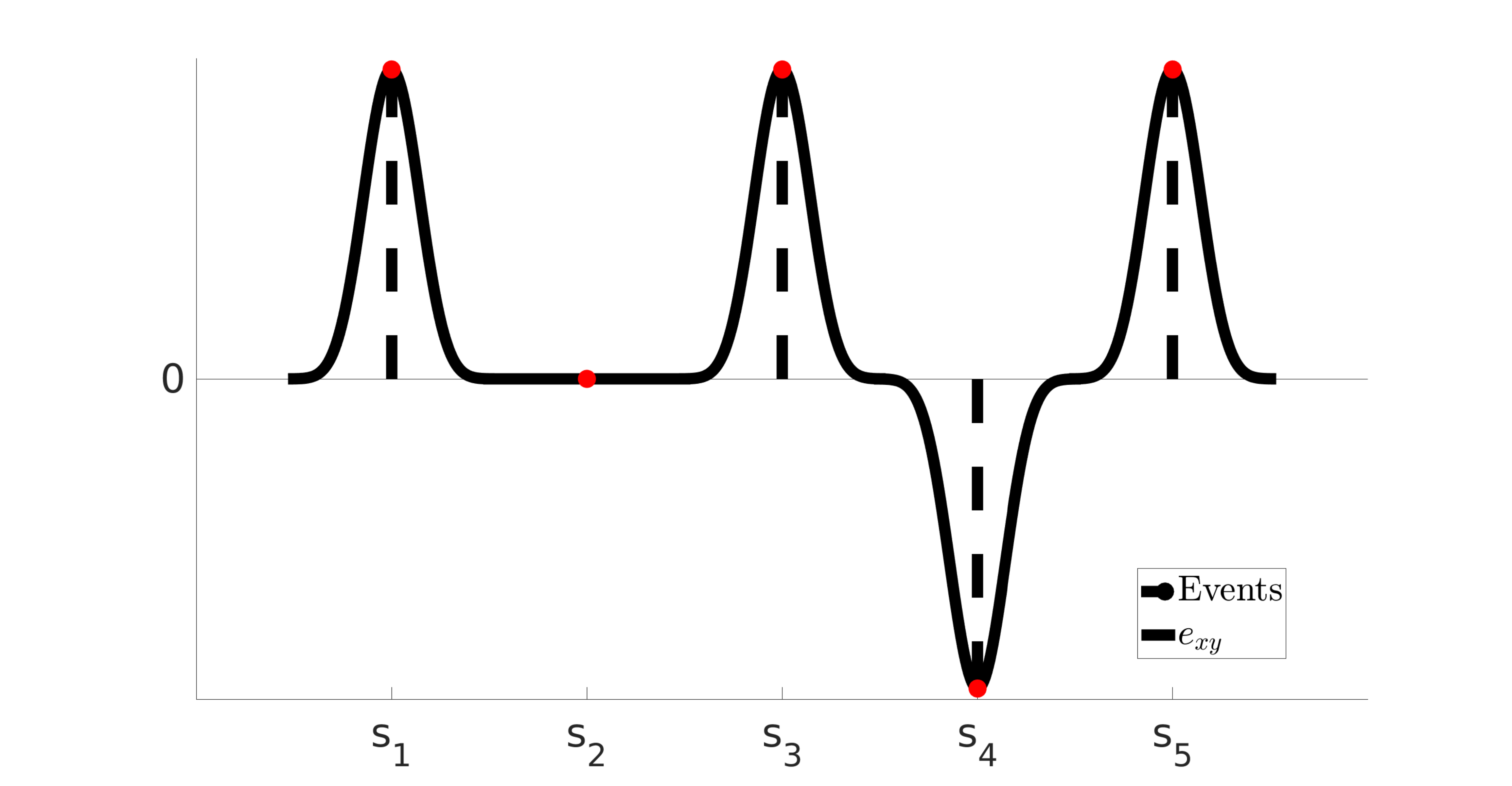}
		\caption{In this figure the solid line represents the function $\bm{e}_{xy}$ overlayed 
			onto the a stem plot (dashed line)that represents a series of recorded events. The red dots are used to highlight the events.}
		\label{f:stemplot}
	\end{figure}
	
	From \ref{eq:exy}, we define the `\emph{sum of events}' from time of interest $t_i$ to 
	arbitrary time $t$ as:
	\begin{equation}\label{eq:Exy}
	\bm{E}^i_{xy}(t,t_i) = \int_{t_i}^{t}\bm{e}_{xy}(r) \; dr , 
	\end{equation}
	where $\bm{E}^i_{xy}(t,t_i) \in \mathbb{R}$ corresponds to one pixel and  
		$\bm{E}^i(t,t_i) = \{ \bm{E}^i_{xy}(t,t_i) \}_{xy}  \in \mathbb{R}^{{n_x} \times {n_y}}$ 
		consists of all the pixels. 
	We note that $t_i$, $i=1, \dots, n_t$ are manually chosen and represent the time at which we wish 
	to generate an image. In practice we will set each $t_i$ to correspond to the timestamps of 
	the standard camera images we are using in the model. For convenience we will omit $t_i$ 
	from $\bm{E}^i_{xy}(t,t_i)$ unless needed for clarity.
	Introducing a scalar variable $z$ 
	gives us the EDI Model: 
	\begin{equation}\label{eq:B}
	\bm{B}^i_{xy} = \frac{1}{|\mathcal{I}_i|} \int_{t_i-|\mathcal{I}_i|/2}^{t_i+|\mathcal{I}_i|/2} \bm{L}^i_{xy} \cdot \exp \big(z \cdot \bm{E}^i_{xy}(t)\big)dt \, .
	\end{equation}
	Where $\bm B^i_{xy} \in \mathbb{R}$ and $\bm B^i =\{ \bm B^i_{xy}\}_{xy} \in \mathbb{R}^{n_x\times n_y}$ is a standard camera image. Furthermore, $\bm L^i_{xy} \in \mathbb{R}$ 
	is the pixel value of the unblurred image at location $(x,y)$, and $|\mathcal{I}_i|$ is the exposure time for the $i^{th}$ image. We note that 
	the superscript $i$ in $\bm{B}^i_{xy}$ and $\bm{L}^i_{xy}$ are simply placeholders for notation since the EDI model 
	only considers a single image at a time.
	Taking the log and re-arranging terms of \eqref{eq:B} leads to:
	\begin{equation}\label{eq:EDI}
	\bm{v}_{xy}^i = \bm{d}_{xy}^i - \bm{g}_{xy}^i(z) , 
	\end{equation}
	where 
	\begin{align}\label{eq:vxy}
	\begin{aligned}
	\bm{v}_{xy}^i &=  \log(\bm{L}_{xy}^i), \quad
	\bm{d}_{xy}^i = \log \bm{B}_{xy}^i,\\
	\bm g^i_{xy}(z) &= \log\bigg(\frac{1}{|\mathcal{I}_i|} \int_{t_i-|\mathcal{I}_i|/2}^{t_i+|\mathcal{I}_i|/2} \exp \big(z \cdot \bm{E}_{xy}^i(t)\big)dt\bigg).
	\end{aligned}
	\end{align} 
The formulation in \eqref{eq:EDI} gives us the correlation between the EDI model and other semi-blind inverse problems in image deblurring. Here $\bm{B^i}$ is our blurry image, $\bm{L^i}$ is our latent unblurred image, and $\bm{g^i}$ is our kernel function. As shown in \cite{almeida2009blind,arridge2019solving, bertero2021introduction,bungert2020robust,chung2011designing} this formulation is common for many inverse problems. We note the EDI/mEDI models would fall into the category of semi-blind inverse problems mentioned in Section~\ref{s:inv} since the kernel, $\bm{g^i}$, is known up to the parameter $z$. 	
	\subsubsection{mEDI Model}
	If we consider two latent images $\bm{v}^i$ and $\bm{v}^{i+1}$ we know that event data 
	will give us the per pixel changes that have occurred between the two images. We can 
	mathematically describe this change as $\int_{t_i}^{t_{i+1}} \bm{e}_{xy}(s) ds$ where $\bm{e}_{xy}$ 
	is as given in \eqref{eq:exy} and $t_i$ are user chosen times of interest. Incorporating 
	the threshold variable $z$ allows us to formulate the following equations 
	\begin{equation}\label{eq:vi}
	\bm{v}^{i+1}_{xy} - \bm{v}^i_{xy} = z \int_{t_i}^{t_{i+1}} \bm{e}_{xy}(s) ds =: \bm{b}_{xy}^i(z).
	\end{equation}
	Combining \eqref{eq:EDI} and \eqref{eq:vi} gives us the following system 
	of equations:
	\begin{equation}\label{eq:system}
	\left(
	\begin{array}{rrrrrrrr}
	-1  &   1 &      &     &  & &  &  \\
	&  -1 &  1  &     &  &  &  &  \\
	&     &  & \ddots & \ddots &   &  &   \\
	&     &     &    &   & -1 & 1 &   \\		
	&     &     &    &   &    & -1 & 1 \\
	\hline
	1 &  &  &  &  &  &  & \\
	& 1  &  &  &  &  &  & \\ 
	& &  1  &  &  &  &  & \\
	& &  &\ddots & & & & \\
	& &  &  &  1  &  &  & \\
	& &  &  &  &  1  &  & \\
	& &  &  &  &  &  1  & 
	\end{array}	
	\right){\bm{v}}_{xy} = \left(\begin{array}{rrr}
	\bm{b}_{xy}\\
	\bm{d}_{xy} - \bm{g}_{xy}
	\end{array}
	\right)
	\end{equation} 
	Notice that, \eqref{eq:system} gives us the mEDI model described by \cite{pan2019bringing}.
	The size the upper half of the submatrix in \eqref{eq:system} is $\mathbb{R}^{(n_t-1) \times n_t}$ and the lower half matrix is $\mathbb{R}^{{n_t} \times {n_t}}$.
	
	\subsubsection{mEDI Model - Optimizing for z}
	The variable $z$ is found by solving the following minimization problem for each 
	pixel $(x,y)$ of an image of interest $i$: 
	\begin{equation}\label{eq:medi_opt}
	\min_z \frac12 \|{\bm{v}}^i_{xy}({z}) + {\bm{g}}^i_{xy}(z) - {\bm{d}}^i_{xy}\|^2_2.
	\end{equation}
	Here $\bm{v}_{xy}$, ${\bm{d}}^i_{xy}$, and ${\bm{g}}^i_{xy}(z)$ are as given in 
	\eqref{eq:vxy}. We refer to \cite{pan2019bringing} for their solution approach. 
	Notice that \eqref{eq:vi} is not used as constraint to the minimization problem 
	\eqref{eq:medi_opt}, but \eqref{eq:medi_opt} is used to generate the image
	reconstructions.

	\subsubsection{Limitations of the mEDI and EDI Models} 
	While the mEDI model uses multiple frames in order to accurately represent 
	de-blurred images some limitations for the model remain.
	
	\begin{enumerate}[(a)]
		\item Performance of the model can degrade for images that contain large 
		variations of light intensity within a single scene. Some examples of this 
		are having very dark objects in a bright room with a highly dynamic scene. 
		In these instances we can see ``shadowing" effects where shadows appear 
		across multiple frames. We refer to Section \ref{s:shadow} for specific examples. 
		
		\item 
		The optimization problem introduced \eqref{eq:medi_opt} to 
		identify $z$ only seeks to optimize the pixels of one image at a time and neglects 
		the constraints \eqref{eq:vi}. Then uses \eqref{eq:vi} to reconstruct multiple images
		with the same $z$ value.
		
		\item Recall from \ref{eq:vi} that $\bm{b}^i_{xy} := z \int_{t_i}^{t_{i+1}} \bm{e}_{xy}(s) \; ds$. This definition does not include any information from the standard images $\bm{B}^i$. Due to this the mEDI model is not well suited for generating image reconstructions across time-spans that include multiple standard images. 
	\end{enumerate}
	\section{Proposed Model and Algorithm}\label{s:Opt}
	
	\subsection{Model}
	
	For each pixel $(x,y)$, we consider the following bilevel inverse problem 
	\begin{subequations}\label{eq:orig}
		\begin{equation}\label{eq:out}
		\min_{\bm z_{xy}} J(\bm u_{xy},\bm{z}_{xy}) := \frac{1}{2} \| \bm u_{xy}(\bm{z}_{xy}) + \bm g_{xy}(\bm{z}_{xy}) - \bm d_{xy} \|_2^2 + \frac{\lambda_1}{2}\|\bm{z}_{xy} \|_2^2
		\end{equation}
		subject to constraints 
		\begin{equation}\label{eq:in}
		\min_{\bm u_{xy}} \frac{1}{2} \| {\bm A} \bm u_{xy}  -  \bm b_{xy}(\bm{z}_{xy}) \|_2^2 + \frac{\lambda_2}{2} \| \bm u_{xy}\|_2^2 .
		\end{equation}
	\end{subequations}
	Notice, that this is an optimization problem for one pixel across all images. 
	By $0 < \lambda_1,\lambda_2$, we denote the regularization parameters. Moreover, 
	\[
	\bm A = 
	\left(
	\begin{array}{rrrrrrrr}
	-1  &   1 &      &     &  & &  &  \\
	&  -1 &  1  &     &  &  &  &  \\
	&     &  & \ddots & \ddots &   &  &   \\
	&     &     &    &   & -1 & 1 &   \\		
	&     &     &    &   &    & -1 & 1 \\
	\end{array}	
	\right) \in \mathbb{R}^{(n_t-1) \times n_t} 
	\]	
	is a submatrix of the matrix given in \eqref{eq:system}. 
	Recall in Equation \eqref{eq:medi_opt} that $z$ is taken as a scalar and the optimization is done over a single image at a time. We consider  $\bm{z}_{xy}$ as size $n_t \times 1$ so that we can optimize 
	over the same pixel $(x,y)$ for all images simultaneously.  Also, in contrast to \cite{pan2019bringing} 
	since we have defined $\bm{z}_{xy}$ as a vector over all images, we also modify the variable 
	$\bm{b}$ with the following definition:
	\begin{equation}\label{eq:b}
	\bm{b}^i_{xy}(\bm{z}_{xy}) := (\bm{d}_{xy}^{i+1} - \bm{g}_{xy}^{i+1}(\bm{z}_{xy})) - (\bm{d}_{xy}^{i} - \bm{g}_{xy}^{i}(\bm{z}_{xy})) \in \mathbb{R}^{n_t -1}.
	\end{equation}
	Where $\bm{z}^i_{xy} \in \mathbb{R}$ and is the $i$-$th$ component of the vector $\bm{z}_{xy} \in \mathbb{R}^{n_t}$. This is done so the events associated to the $i$-$th$ image are being optimized by the $i$-$th$ $\bm{z}_{xy}$ component.
	
	Notice that, $\bm A$ above is not a square matrix. For every $\lambda_2 > 0$, 
	the lower level problem is uniquely solvable and the solution is given by 
	\begin{equation}
	\bm{u}_{xy}(\bm{z}_{xy}) 
	= (\bm A^\top \bm A  + \lambda_2 \bm I)^{-1} \bm A^\top \bm b_{xy}(\bm{z}_{xy}) 
	= \bm K^{-1} \bm{A^\top}\bm b_{xy}(\bm{z}_{xy})\label{eq:u}
	\end{equation}
	with
	\[ 
	\bm K :=  (\bm A^\top \bm A  + \lambda_2 \bm I) \in \mathbb{R}^{n_t \times n_t}.
	\] 
	Substituting, this in 
	the upper level problem, we obtain the following reduced problem 
	\begin{equation}\label{eq:rp}
	\min_{\bm{z}_{xy} \in Z_{ad}} \mathcal{J}(\bm{z}_{xy}) := J(\bm K^{-1} \bm A^\top \bm b_{xy}(\bm{z}_{xy}),\bm{z}_{xy}) 
	\end{equation}
	which is now just a problem in $\bm{z}_{xy}$. Notice, that the reduced problem \eqref{eq:rp}
	is equivalent to the full space formulation
	\begin{subequations}\label{eq:fs_1}
		\begin{equation}\label{eq:out_1}
		\min_{\bm{z}_{xy}} J(\bm u_{xy}, \bm{z}_{xy}) 
		\end{equation}
		subject to constraints 
		\begin{equation}\label{eq:in_1}
		\bm K \bm u_{xy} = \bm A^\top \bm b_{xy} (\bm{z}_{xy}) .
		\end{equation}
	\end{subequations}
	The next result establishes existence of solution to the reduced problem 
	\eqref{eq:rp} and equivalently to \eqref{eq:fs_1}.
	\begin{theorem}\label{thm:exist}
		There exists a solution to the  reduced problem \eqref{eq:rp}.
	\end{theorem}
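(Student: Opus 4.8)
The plan is to establish existence by the direct method of the calculus of variations (equivalently, the Weierstrass extreme value theorem), since the reduced problem \eqref{eq:rp} is a finite-dimensional minimization over $\bm{z}_{xy}\in Z_{ad}\subseteq\mathbb{R}^{n_t}$. Three ingredients suffice: (i) the reduced objective $\mathcal{J}$ is well-defined and continuous on $Z_{ad}$; (ii) $\mathcal{J}$ is coercive; and (iii) $Z_{ad}$ is closed. Given these, I would take a minimizing sequence $\{\bm{z}^k\}\subset Z_{ad}$, use coercivity to conclude it is bounded, extract a convergent subsequence by Bolzano--Weierstrass with limit $\bar{\bm{z}}\in Z_{ad}$, and invoke continuity to obtain $\mathcal{J}(\bar{\bm{z}})\le\liminf_k \mathcal{J}(\bm{z}^k)=\inf_{Z_{ad}}\mathcal{J}$, so that $\bar{\bm{z}}$ is a minimizer.

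The only nontrivial ingredient is the well-definedness and continuity of $\mathcal{J}$, which reduces to the behavior of the kernel map $\bm{g}^i_{xy}$ in \eqref{eq:vxy}. First I would note that, for each fixed $\bm{z}_{xy}$, the function $t\mapsto \bm{E}^i_{xy}(t)$ is bounded on the compact exposure interval (it is the integral of the bounded bump expansion $\bm{e}_{xy}$), so the integrand $\exp(\bm{z}^i_{xy}\,\bm{E}^i_{xy}(t))$ is strictly positive and bounded, and hence the integral inside $\bm{g}^i_{xy}$ lies in $(0,\infty)$; its logarithm is therefore finite, and $\bm{g}^i_{xy}$ is well-defined for every $\bm{z}_{xy}\in\mathbb{R}^{n_t}$. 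Continuity (in fact smoothness) in $\bm{z}_{xy}$ then follows by differentiation under the integral sign / dominated convergence, since on any bounded neighborhood the integrand and its $\bm{z}$-derivatives are dominated by integrable functions. Consequently $\bm{b}_{xy}(\bm{z}_{xy})$ from \eqref{eq:b} is continuous, the lower-level solution $\bm{u}_{xy}(\bm{z}_{xy})=\bm{K}^{-1}\bm{A}^\top\bm{b}_{xy}(\bm{z}_{xy})$ from \eqref{eq:u} is continuous (it is a fixed linear map composed with a continuous function, using that $\bm{K}$ is invertible for $\lambda_2>0$), and $\mathcal{J}$ is continuous as a composition of continuous maps with the squared Euclidean norm.

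For coercivity I would exploit the Tikhonov term: because the data-misfit part of $J$ is a nonnegative squared norm, we have the lower bound $\mathcal{J}(\bm{z}_{xy})\ge \tfrac{\lambda_1}{2}\|\bm{z}_{xy}\|_2^2$ for all $\bm{z}_{xy}$, and since $\lambda_1>0$ the right-hand side tends to $+\infty$ as $\|\bm{z}_{xy}\|_2\to\infty$. This gives coercivity independently of the (possibly complicated, nonconvex) misfit term, and it is precisely what makes the minimizing sequence bounded. Finally, closedness of $Z_{ad}$ is assumed (it holds trivially when $Z_{ad}=\mathbb{R}^{n_t}$, and for box-type admissible sets), so the subsequential limit remains feasible.

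I expect the main obstacle to be purely the verification that $\bm{g}^i_{xy}$ is finite and continuous; everything downstream is routine. The one point worth double-checking is that the integral defining $\bm{g}^i_{xy}$ stays bounded away from $0$ locally uniformly in $\bm{z}_{xy}$, so that $\log$ does not blow up along the minimizing sequence --- but this is guaranteed because the integrand is strictly positive and the interval has positive length $|\mathcal{I}_i|$. No convexity of $\mathcal{J}$ is needed for existence; convexity and local uniqueness are deferred to Theorem~\ref{thm:interval}.
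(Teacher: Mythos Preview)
Your proposal is correct and follows essentially the same route as the paper: the direct method, with boundedness of a minimizing sequence obtained from the Tikhonov term $\tfrac{\lambda_1}{2}\|\bm z_{xy}\|_2^2$ and passage to the limit via continuity of $\bm g_{xy}$, $\bm b_{xy}$, and $\bm u_{xy}$. Your write-up is in fact more careful than the paper's about justifying well-definedness and continuity of $\bm g^i_{xy}$ (the paper simply asserts the needed convergences), but the strategy is identical.
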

	
	\begin{proof}
		The proof follows from the standard Direct method of calculus of variations. We will 
		omit the subscript $xy$ for notation simplicity. 
		We begin by noticing that $\mathcal{J}(\cdot)$ is bounded below by 0. Therefore, there exists 
		a minimizing sequence $\{\bm z_n\}_{n \in \mathbb{N}}$ such that 
		$$
		\lim_{n\rightarrow \infty} \mathcal{J}(\bm z_n) = \inf_{\bm z} \mathcal{J}(\bm z) .
		$$
		It then follows that 
		\[
		\frac{\lambda_1}{2} \lim_{n\rightarrow \infty} \| \bm z_n\|^2 
		\le \lim_{n\rightarrow \infty} \mathcal{J}(\bm z_n) 
		= \inf_{\bm z} \mathcal{J}(\bm z) 
		\le \mathcal{J}(\bm 0) 
		= C < \infty ,
		\]
		where the constant $C$ is independent of $n \in \mathbb{N}$. Thus, $\{\bm z_n\}_{n \in \mathbb{N}}$
		is a bounded sequence. Therefore, it has a convergent subsequence, still denoted by 
		$\{\bm z_n\}_{n \in \mathbb{N}}$, such that $\bm z_n \rightarrow \bar{\bm z}$. 
		It them remains to show that $\bar{\bm z}$ is the minimizer. Since, 
		$\bm u_n := {\bm u}(\bm z_n)$ solves \eqref{eq:u}, therefore, we have that 
		$\bm u_n \rightarrow {\bm u}(\bar{\bm z})$ and $\bm g(\bm z_n) \rightarrow 
		\bm g(\bm {\bar z})$ as $n \rightarrow \infty$. Subsequently, from the definition of 
		infimum and these convergence estimates, we readily obtain that 
		\[
		\lim_{n\rightarrow \infty} \mathcal{J}(\bm z) 
		\ge \liminf_{n\rightarrow \infty} \frac12 \| \bm u(\bm z_n) + \bm{g}(\bm z_n) - \bm d \|_2^2
		+ \liminf_{n\rightarrow \infty} \frac{\lambda_1}{2} \| \bm z_n \|_2^2 
		= \mathcal{J}(\bar {\bm z}) . 
		\]
		Thus $\bar {\bm z}$ is the minimizer and the proof is complete. 	
	\end{proof}
	In order to develop a gradient based solver for  \eqref{eq:rp}, we next write 
	down the expression of gradient of $\mathcal{J}$. 
	\begin{lemma}\label{lem:grads}
		The reduced objective $\mathcal{J}$ is continuously differentiable 
		and the derivative is given by
		\begin{align*}
		\nabla \mathcal{J}(\bm{z}) 
		= \left\langle \bm K^{-1} \bm A^\top \bm b(\bm{z}_{xy}) + \bm g(\bm{z}_{xy}) - \bm d_{xy} \,  , \,
		\bm K^{-1} \bm A^\top \bm b_{xy}'(\bm{z}_{xy}) +  \bm g_{xy}'(\bm{z_{xy}}) \right\rangle
		+ \lambda \bm{z}_{xy},
		\end{align*}
		where $\bm K^{-1} \bm A^\top \in \mathbb{R}^{n_t \times (n_t-1)}$, 
		\begin{align*}
		\bm b_{xy}'(\bm{z}_{xy}) =  \begin{pmatrix}
		(\bm{g}_{xy}^1)' & - (\bm{g}_{xy}^2)' & 0 & \cdots & 0 & 0 & 0 \\
		0 & (\bm{g}_{xy}^2)' & - (\bm{g}_{xy}^3)' & 0 & \cdots & 0 & 0 \\
		0 & 0 & (\bm{g}_{xy}^3)' & - (\bm{g}_{xy}^4)' & 0 & \cdots &0 \\
		\vdots & \vdots & \vdots & \vdots & \vdots & \vdots & \vdots\\
		0 & 0 & 0 &  0 & 0 & (\bm{g}_{xy}^{n_t -1})' & - (\bm{g}_{xy}^{n_t})'
		\end{pmatrix} \in \mathbb{R}^{(n_t-1) \times n_t},
		\end{align*} and
		$\bm g_{xy}'(\bm{z_{xy}})$ is a diagonal matrix given by 
		\begin{align}
		&\bm g_{xy}'(\bm{z_{xy}}) = \\
		&\begin{pmatrix}
		\frac{\int_{ \mathcal{I}_1 } \bm{E}^1_{xy}(t) \exp(\bm{z}^1_{xy} \cdot \bm{E}^1_{xy}(t)) dt}{\int_{ \mathcal{I}_1 }\exp(\bm{z}^1_{xy} \cdot \bm{E}^1_{xy}(t)) dt} & 0 & 0 & 0 & 0\\
		0 & \frac{\int_{ \mathcal{I}_2} \bm{E}_{xy}^2(t) \exp(\bm{z}^2_{xy} \cdot \bm{E}_{xy}^2(t)) dt}{\int_{ \mathcal{I}_2}\exp(\bm{z}^2_{xy} \cdot \bm{E}_{xy}^2(t)) dt} & 0 & 0 & 0 \\
		0 & 0 & 0 &\ddots & 0 \\ 
		0 & 0 & 0 & 0 &  \frac{\int_{ \mathcal{I}_{n_t}} \bm{E}_{xy}^{n_t}(t) \exp(\bm{z}^{n_t}_{xy} \cdot \bm{E}_{xy}^{n_t}(t)) dt}{\int_{ \mathcal{I}_{n_t}}\exp(\bm{z}^{n_t}_{xy} \cdot \bm{E}_{xy}^{n_t}(t)) dt}
		\end{pmatrix}. \label{eq:c_exp}
		\end{align}
		
	\end{lemma}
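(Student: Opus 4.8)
The plan is to recognize $\mathcal{J}$ as a regularized nonlinear least-squares functional and to differentiate it by the chain rule, isolating the only genuine analytic content, namely the smoothness of the kernel map $\bm g_{xy}$. Writing the reduced residual as $\bm r(\bm z_{xy}) := \bm K^{-1}\bm A^\top \bm b_{xy}(\bm z_{xy}) + \bm g_{xy}(\bm z_{xy}) - \bm d_{xy}$, we have $\mathcal{J}(\bm z_{xy}) = \tfrac{1}{2}\|\bm r(\bm z_{xy})\|_2^2 + \tfrac{\lambda_1}{2}\|\bm z_{xy}\|_2^2$. Once $\bm r$ is shown to be continuously differentiable with Jacobian $\bm r'$, the identity $\nabla\mathcal{J} = [\bm r']^\top \bm r + \lambda_1 \bm z_{xy}$ follows at once, and rewriting $[\bm r']^\top \bm r = \langle \bm r, \bm r' \rangle$ (as the row vector under the pairing defined in Section~\ref{s:not}) reproduces the claimed expression. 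The whole proof therefore reduces to establishing $C^1$ smoothness of $\bm r$ and identifying $\bm r'$.

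The first and central step is the smoothness and derivative of each scalar kernel $\bm g^i_{xy}$. Recall from \eqref{eq:vxy} that $\bm g^i_{xy}(\bm z^i_{xy}) = \log\bigl(\tfrac{1}{|\mathcal{I}_i|}\int_{\mathcal{I}_i}\exp(\bm z^i_{xy}\cdot \bm E^i_{xy}(t))\,dt\bigr)$. Since $\bm E^i_{xy}$ is the integral \eqref{eq:Exy} of the bounded function $\bm e_{xy}$ over a compact exposure interval, it is continuous and bounded on $\mathcal{I}_i$; hence on any bounded neighborhood of $\bm z^i_{xy}$ both the integrand and its $z$-derivative $\bm E^i_{xy}(t)\exp(\bm z^i_{xy}\cdot \bm E^i_{xy}(t))$ are dominated by an integrable function. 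I would then invoke the Leibniz integral rule to differentiate under the integral sign, and, because the denominator $\int_{\mathcal{I}_i}\exp(\cdot)\,dt$ is strictly positive, the logarithm is smooth. This yields $(\bm g^i_{xy})' = \frac{\int_{\mathcal{I}_i}\bm E^i_{xy}(t)\exp(\bm z^i_{xy}\cdot\bm E^i_{xy}(t))\,dt}{\int_{\mathcal{I}_i}\exp(\bm z^i_{xy}\cdot\bm E^i_{xy}(t))\,dt}$ after the factors $1/|\mathcal{I}_i|$ cancel. As $\bm g^i_{xy}$ depends only on the $i$-th component, $\bm g'_{xy}$ is diagonal with these entries, matching \eqref{eq:c_exp}.

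The second step assembles the remaining Jacobians. From the definition \eqref{eq:b}, $\bm b^i_{xy} = (\bm d^{i+1}_{xy}-\bm g^{i+1}_{xy}) - (\bm d^i_{xy}-\bm g^i_{xy})$ depends only on $\bm z^i_{xy}$ and $\bm z^{i+1}_{xy}$, so $\partial_{\bm z^i_{xy}}\bm b^i_{xy}=(\bm g^i_{xy})'$, $\partial_{\bm z^{i+1}_{xy}}\bm b^i_{xy}=-(\bm g^{i+1}_{xy})'$, and all other partials vanish; this is exactly the bidiagonal matrix $\bm b'_{xy}$ in the statement. Because $\lambda_2>0$ makes $\bm K$ invertible independently of $\bm z_{xy}$, the matrix $\bm K^{-1}\bm A^\top$ is constant, so $\bm u_{xy}(\bm z_{xy})=\bm K^{-1}\bm A^\top\bm b_{xy}(\bm z_{xy})$ is a linear map composed with the smooth $\bm b_{xy}$, hence continuously differentiable with Jacobian $\bm K^{-1}\bm A^\top\bm b'_{xy}$. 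Adding $\bm g_{xy}$ gives $\bm r' = \bm K^{-1}\bm A^\top\bm b'_{xy}+\bm g'_{xy}$.

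Finally I would conclude by the chain rule: substituting $\bm r$ and $\bm r'$ into $\nabla\mathcal{J}=[\bm r']^\top\bm r+\lambda_1\bm z_{xy}$ and expressing $[\bm r']^\top\bm r=\langle\bm r,\bm r'\rangle$ yields the asserted formula. Continuity of $\nabla\mathcal{J}$ follows because each diagonal entry of $\bm g'_{xy}$ is a quotient of integrals depending continuously on $\bm z_{xy}$ with strictly positive denominator, so $\bm r'$, and therefore $\nabla\mathcal{J}$, is continuous. I expect the only delicate point to be the rigorous justification of differentiation under the integral sign together with the strict positivity of the denominator; the rest is bookkeeping of the chain rule and the sparse structure of $\bm b'_{xy}$.
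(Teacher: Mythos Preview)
Your proposal is correct and follows the same approach as the paper, which simply states that ``the proof follows by simple calculations.'' You have in fact supplied far more detail than the paper does, carefully justifying differentiation under the integral sign and the sparse structure of $\bm b'_{xy}$ and $\bm g'_{xy}$, all of which the paper leaves implicit.
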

	\begin{proof}
		The proof follows by simple calculations.	
	\end{proof}
	
	The next result establishes that $(\bm{g}^i_{xy}(\bm{z}_{xy}))''$ is bounded and strictly positive
	for each $i$, with $\mathcal{I}_i$ denoting the $i$-$th$ exposure time interval. 
	\begin{theorem}\label{thm:gconvex}
		Let $\mathcal{I}_i$ denote the $i$-$th$ exposure time interval with  
			$i = 1,\dots, n_t$. If there exists $t \in  \mathcal{I}_i$, for all $i$, with 
			${\bm E}^i_{xy}(t)$ being nonzero then $(\bm{g}^i_{xy}(\bm{z}_{xy}))'' \in \mathbb{R}$ 
			is bounded and strictly positive for all $i$.
	\end{theorem}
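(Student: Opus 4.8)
The plan is to recognize $(\bm{g}^i_{xy})''$ as the variance of $\bm{E}^i_{xy}$ under a Gibbs-type probability measure; nonnegativity and the upper bound are then immediate, and strict positivity follows by combining the hypothesis with the normalization $\bm{E}^i_{xy}(t_i)=0$ that is built into the definition of the sum of events.

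First I would fix $i$, write $z$ for the scalar $\bm{z}^i_{xy}$ and $E(t)$ for $\bm{E}^i_{xy}(t)$, and set $Z(z):=\int_{\mathcal{I}_i} e^{zE(t)}\,dt$, so that \eqref{eq:vxy} reads $\bm{g}^i_{xy}(z)=\log Z(z)-\log|\mathcal{I}_i|$. Since $E$ is continuous on the compact interval $\mathcal{I}_i$ (it is an antiderivative of the bump sum $\bm{e}_{xy}$ by \eqref{eq:Exy}), hence bounded, differentiation under the integral sign is justified and gives, after introducing the probability measure $d\mu_z:=Z(z)^{-1}e^{zE(t)}\,dt$ on $\mathcal{I}_i$,
\[
(\bm{g}^i_{xy})''(z) = \frac{\int_{\mathcal{I}_i} E^2 e^{zE}\,dt}{\int_{\mathcal{I}_i} e^{zE}\,dt} - \left(\frac{\int_{\mathcal{I}_i} E e^{zE}\,dt}{\int_{\mathcal{I}_i} e^{zE}\,dt}\right)^2 = \mathbb{E}_{\mu_z}[E^2]-\bigl(\mathbb{E}_{\mu_z}[E]\bigr)^2 = \mathrm{Var}_{\mu_z}(E)\ge 0.
\]

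Boundedness is then the easy half: setting $M_i:=\max_{t\in\mathcal{I}_i}|E(t)|<\infty$ (finite by continuity on a compact interval), I get $(\bm{g}^i_{xy})''(z)\le \mathbb{E}_{\mu_z}[E^2]\le M_i^2$ uniformly in $z$. For strict positivity I would argue that a vanishing variance forces $E$ to be $\mu_z$-almost-everywhere constant, and since the density $e^{zE}$ is strictly positive this is the same as $E$ being constant Lebesgue-a.e.\ on $\mathcal{I}_i$. But $\bm{E}^i_{xy}(t)=\int_{t_i}^t\bm{e}_{xy}(r)\,dr$ vanishes at the midpoint $t_i$ of $\mathcal{I}_i$, while the hypothesis supplies some $t^*\in\mathcal{I}_i$ with $E(t^*)\neq 0$; continuity of $E$ then rules out $E$ being a.e.\ constant, so $\mathrm{Var}_{\mu_z}(E)>0$ for every $z$.

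The main obstacle is this strict-positivity step, where one must make sure the pointwise hypothesis truly excludes a constant $E$: the clean resolution is exactly the observation that $\bm{E}^i_{xy}(t_i)=0$ is forced by the definition of the sum of events, so a single nonzero value automatically yields genuine non-constancy. If one prefers to avoid probabilistic language, the same conclusion follows from Cauchy--Schwarz, $\left(\int E e^{zE}\,dt\right)^2\le \int E^2 e^{zE}\,dt\cdot\int e^{zE}\,dt$, whose equality case is precisely $E$ constant a.e.; the non-constancy argument above then forces the inequality, and hence $(\bm{g}^i_{xy})''(z)$, to be strict.
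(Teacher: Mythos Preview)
Your argument is correct and, at its core, equivalent to the paper's. Both compute
\[
(\bm{g}^i_{xy})''(z)=\frac{\int E^2 e^{zE}}{\int e^{zE}}-\Bigl(\frac{\int E e^{zE}}{\int e^{zE}}\Bigr)^2,
\]
and both obtain strict positivity from the same observation: the definition \eqref{eq:Exy} forces $\bm{E}^i_{xy}(t_i)=0$, so the hypothesis furnishes two distinct values of $E$ and rules out constancy.

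The difference is purely in packaging. You identify the expression as $\mathrm{Var}_{\mu_z}(E)$ for the Gibbs measure $d\mu_z\propto e^{zE}\,dt$ and invoke the trivial facts that a variance is nonnegative, vanishes only on constants, and is bounded by $\sup E^2$. The paper instead multiplies out the numerator via Fubini and symmetrizes in $(t,s)$ to reach
\[
(\bm{g}^i_{xy})''(z)=\frac{\iint_{\mathcal{I}_i\times\mathcal{I}_i} e^{z(E(t)+E(s))}\,\tfrac12\bigl(E(t)-E(s)\bigr)^2\,ds\,dt}{\bigl(\int_{\mathcal{I}_i}e^{zE}\bigr)^2},
\]
which is exactly the identity $\mathrm{Var}(X)=\tfrac12\mathbb{E}[(X-X')^2]$ written out by hand; this yields the slightly different (and sometimes sharper) bound $\max_{t,s}\tfrac12(E(t)-E(s))^2$ in place of your $M_i^2$. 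Your Cauchy--Schwarz remark is yet another rephrasing of the same inequality. In short: same proof, three dialects.
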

	\begin{proof}
		First we will calculate $(\bm{g}^i_{xy}(\bm{z}_{xy}))''$.
		Recall:
		\begin{align*}
		\bm{g}^i_{xy}(\bm{z}_{xy}) = \log\bigg(\frac{1}{| \mathcal{I}_i |}\int_{ \mathcal{I}_i }\exp(\bm{z}^i_{xy} \cdot \bm{E}^i_{xy}(t))dt\bigg)
		\implies 
		(\bm{g}^i_{xy}(\bm{z}_{xy}))' =
		\frac{\int_{ \mathcal{I}_i } \bm{E}^i_{xy}(t) \exp(\bm{z}^i_{xy} \cdot \bm{E}^i_{xy}(t)) dt}{\int_{ \mathcal{I}_i }\exp(\bm{z}^i_{xy} \cdot \bm{E}^i_{xy}(t)) dt} ,
		\end{align*}
		yielding 
		\begin{align}
		\begin{aligned}
		(\bm{g}^i_{xy}(\bm{z}_{xy}))''
		=\frac{ \boxed{\textrm{I}} - 
			\bigg(
			\int_{ \mathcal{I}_i } \bm{E}^i_{xy}(t) \exp(\bm{z}^i_{xy} \cdot \bm{E}^i_{xy}(t)) dt\bigg)^2 
		}
		{\big[\int_{ \mathcal{I}_i } \exp(\bm{z}^i_{xy}\cdot \bm{E}^i_{xy}(t))dt\big]^2} ,
		\label{eq:int}
		\end{aligned}
		\end{align}
		where $\boxed{\textrm{I}} = \bigg(\int_{ \mathcal{I}_i } \exp(\bm{z}^i_{xy} \cdot \bm{E}^i_{xy}(t)) dt\bigg)\bigg(\int_{ \mathcal{I}_i } \big[\bm{E}^i_{xy}(t)\big]^2 \exp(\bm{z}^i_{xy} \cdot \bm{E}^i_{xy}(t)) dt\bigg)$. 
		\linebreak
		 Let $\bigg(\int_{ \mathcal{I}_i } \exp(\bm{z}^i_{xy} \cdot  \bm{E}^i_{xy}(t)) dt\bigg) =: \alpha$. 
		Then we have the following:
		\begin{align*}
		\boxed{\textrm{I}}
		&= \bigg(\int_{ \mathcal{I}_i } \big[\bm{E}^i_{xy}(t)\big]^2 \exp(\bm{z}^i_{xy} \cdot \bm{E}^i_{xy}(t))\cdot\alpha\; dt\bigg)\\
		&= \bigg(\int_{ \mathcal{I}_i } \big[\bm{E}^i_{xy}(t)\big]^2 \exp(\bm{z}^i_{xy} \cdot \bm{E}^i_{xy}(t))\cdot \bigg(\int_{ \mathcal{I}_i } \exp(\bm{z}^i_{xy} \cdot \bm{E}^i_{xy}(s)) ds\bigg) \; dt\bigg)\\
		&= \bigg(\int_{ \mathcal{I}_i }\int_{ \mathcal{I}_i } \big[\bm{E}^i_{xy}(t)\big]^2 \exp(\bm{z}^i_{xy} \cdot \bm{E}^i_{xy}(t))\cdot  \exp(\bm{z}^i_{xy} \cdot \bm{E}^i_{xy}(s)) ds \; dt\bigg) . 
		\end{align*}
		Using Fubini's Theorem and following the procedure above for the two integral products in the numerator of \eqref{eq:int} we get the following equalities: 
		\begin{align*}
		\eqref{eq:int}
		&=\frac{
			\boxed{\textrm{I}} 
			- \bigg(\iint\limits_{ \mathcal{I}_i  \times  \mathcal{I}_i } \bm{E}^i_{xy}(t) \exp(\bm{z}^i_{xy} \cdot \bm{E}^i_{xy}(t)) \cdot\; \bm{E}^i_{xy}(s)  \exp(\bm{z}^i_{xy} \cdot \bm{E}^i_{xy}(s))\;ds\; dt\bigg)}{\big[\int_{ \mathcal{I}_i } \exp(\bm{z}^i_{xy}\cdot \bm{E}^i_{xy}(t))dt\big]^2}\\
		&=\frac{\bigg(\iint\limits_{ \mathcal{I}_i  \times  \mathcal{I}_i } \big[\exp(\bm{z}^i_{xy} \cdot \bm{E}^i_{xy}(t)) \exp(\bm{z}^i_{xy} \cdot \bm{E}^i_{xy}(s))\big]\bigg(\big[\bm{E}^i_{xy}(t)\big]^2 - \bm{E}^i_{xy}(s)\bm{E}^i_{xy}(t)\bigg) \;ds \; dt\bigg)}{\big[\int_{ \mathcal{I}_i } \exp(\bm{z}^i_{xy} \cdot \bm{E}^i_{xy}(t))dt\big]^2}\\
		&= \frac{\bigg(\iint\limits_{ \mathcal{I}_i  \times  \mathcal{I}_i } \big[\exp(\bm{z}^i_{xy} \cdot \bm{E}^i_{xy}(t)) \exp(\bm{z}^i_{xy} \cdot \bm{E}^i_{xy}(s))\big]\bigg(\frac{1}{2}\big[\bm{E}^i_{xy}(t) - \bm{E}^i_{xy}(s)\big]^2\bigg) \;ds \; dt\bigg)}{\big[\int_{ \mathcal{I}_1 } \exp(\bm{z}^i_{xy}\cdot \bm{E}^i_{xy}(t))dt\big]^2}.\stepcounter{equation}\tag{\theequation}\label{eq:pos}
		\end{align*}
		We notice that  Equation \eqref{eq:pos} is positive for $\bm{E}^i_{xy}(t) \neq \bm{E}^i_{xy}(s)$ (since all terms are positive) and the expression is $0$ for $t =s$. By assumption there exists some $t \in \mathcal{I}_i$ such that $\bm{E}^i_{xy}(t) \neq 0$ which implies that an event has occurred at the pixel ${xy}$ during the interval $\mathcal{I}_i.$ Without loss of generality suppose a single event occurred at time $\bar{t}$. Recall from \eqref{eq:Exy} that $\bm{E}^i_{xy}(t,t_i) = \int_{t_i}^{t}\bm{e}_{xy}(r) \; dr$. Then $0 =  \bm{E}^i_{xy}(t_i,t_i) \neq \bm{E}^i_{xy}(\bar{t},t_i)$ which implies  $\bm{E}^i_{xy}(t_i) \neq \bm{E}^i_{xy}(\bar{t})$. Which further impiles that $\eqref{eq:pos} > 0$. 
		Thus we conclude $(\bm{g}^i_{xy}(\bm{z}_{xy}))'' > 0$.
		Next, using \eqref{eq:pos}, we obtain the following upper bound:
		\begin{align*}
		(\bm{g}^i_{xy}(\bm{z}_{xy}))'' 
		&\leq \frac{\bigg(\iint\limits_{ \mathcal{I}_i \times  \mathcal{I}_i} \big[\exp(\bm{z}^i_{xy} \cdot \bm{E}^i_{xy}(t)) \exp(\bm{z}^i_{xy} \cdot \bm{E}^i_{xy}(s))\big]\max\limits_{t,s\in  \mathcal{I}_i}\bigg\{\frac{1}{2}\big[\bm{E}^i_{xy}(t) - \bm{E}^i_{xy}(s)\big]^2\bigg\} \;ds \; dt\bigg)}{\big[\int_{ \mathcal{I}_i} \exp(\bm{z}^i_{xy}\cdot \bm{E}_{xy}(t))dt\big]^2}\\
		&= \max\limits_{t,s\in  \mathcal{I}_i}\bigg\{\frac{1}{2}\big[\bm{E}^i_{xy}(t) - \bm{E}^i_{xy}(s)\big]^2\bigg\}\frac{\bigg(\iint\limits_{ \mathcal{I}_i \times  \mathcal{I}_i} \big[\exp(\bm{z}^i_{xy} \cdot \bm{E}^i_{xy}(t)) \exp(\bm{z}^i_{xy} \cdot \bm{E}^i_{xy}(s))\big] \;ds \; dt\bigg)}{\big[\int_{ \mathcal{I}_i} \exp(\bm{z}^i_{xy}\cdot \bm{E}^i_{xy}(t))dt\big]^2}\\
		&= \max\limits_{t,s\in  \mathcal{I}_i}\bigg\{\frac{1}{2}\big[\bm{E}^i_{xy}(t) - \bm{E}^i_{xy}(s)\big]^2\bigg\} . 
		\end{align*}
		Hence
		\[
		0 < (\bm{g}^i_{xy}(\bm{z}_{xy}))'' \leq \max\limits_{t,s\in  \mathcal{I}_i}\bigg\{\frac{1}{2}\big[\bm{E}^i_{xy}(t) - \bm{E}^i_{xy}(s)\big]^2\bigg\} .
		\]
		We now conclude that $(\bm{g}^i_{xy}(\bm{z}_{xy}))''$ is bounded and strictly positive.
	\end{proof}
	In order to show the local convexity of $\mathcal{J}$, we study the structure of
	the Hessian of the first term in the definition of $\mathcal{J}$. 
	\begin{lemma}\label{lem:diag}
		The matrix 
		$\big\langle \text{\bfseries{Hess}}(\bm K^{-1} \bm A^\top\bm{b}_{xy}(\bm{z}_{xy})) + \text{\bfseries{Hess}}(\bm{g}_{xy}(\bm{z}_{xy})),  
		\bm{u}_{xy}(\bm{z}_{xy}) + \bm{g}_{xy}(\bm{z}_{xy}) - \bm{d}_{xy} \big\rangle$ 
		is a diagonal matrix. 
	\end{lemma}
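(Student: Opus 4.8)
The plan is to unpack the bracketed object and exploit the separable dependence of $\bm{g}$ and $\bm{b}$ on the individual components of $\bm{z}$. Writing $\bm{r}(\bm{z}) := \bm{u}(\bm{z}) + \bm{g}(\bm{z}) - \bm{d} = \bm{K}^{-1}\bm{A}^\top\bm{b}(\bm{z}) + \bm{g}(\bm{z}) - \bm{d}$ for the residual, the matrix in the statement is exactly the second-order (non--Gauss--Newton) part of the Hessian of $\tfrac12\|\bm{r}\|_2^2$, i.e. the contraction
\[
\big\langle \operatorname{Hess}(\bm{r}),\, \bm{r}\big\rangle_{pq}
= \sum_{k=1}^{n_t} \bm{r}^k\, \frac{\partial^2 \bm{r}^k}{\partial \bm{z}^p\,\partial \bm{z}^q}.
\]
Since $\bm{K}^{-1}\bm{A}^\top$ and $\bm{d}$ are constant, $\operatorname{Hess}(\bm{r}) = \operatorname{Hess}(\bm{K}^{-1}\bm{A}^\top\bm{b}) + \operatorname{Hess}(\bm{g})$. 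It therefore suffices to show that each of these two third-order tensors is ``diagonal'' in its pair of differentiation indices $(p,q)$; once that is established, contracting against $\bm{r}$ yields a weighted sum of diagonal matrices, which is again diagonal.

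First I would treat $\operatorname{Hess}(\bm{g})$. By the definition in \eqref{eq:vxy}, the $i$-th entry $\bm{g}^i$ depends only on the single scalar $\bm{z}^i$, so $\partial^2 \bm{g}^i/\partial\bm{z}^p\partial\bm{z}^q = (\bm{g}^i)''\,\delta_{pi}\delta_{qi}$, meaning $\operatorname{Hess}(\bm{g}^i)$ has a single nonzero entry at position $(i,i)$. Contracting against $\bm{r}$ then gives the diagonal matrix whose $(p,p)$-entry is $\bm{r}^p(\bm{g}^p)''$. Next I would treat $\operatorname{Hess}(\bm{K}^{-1}\bm{A}^\top\bm{b})$. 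The key observation is that, by \eqref{eq:b}, each scalar $\bm{b}^i = (\bm{d}^{i+1} - \bm{g}^{i+1}(\bm{z}^{i+1})) - (\bm{d}^i - \bm{g}^i(\bm{z}^i))$ is the sum of a function of $\bm{z}^i$ and a function of $\bm{z}^{i+1}$ with no cross term; hence every mixed second derivative of $\bm{b}^i$ vanishes and $\operatorname{Hess}(\bm{b}^i)$ is diagonal, carrying $(\bm{g}^i)''$ at $(i,i)$ and $-(\bm{g}^{i+1})''$ at $(i+1,i+1)$, consistent with the first-derivative pattern recorded in $\bm{b}'$ in Lemma~\ref{lem:grads}.

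Because $\bm{K}^{-1}\bm{A}^\top$ is a constant matrix, the $k$-th entry of $\bm{K}^{-1}\bm{A}^\top\bm{b}$ is the fixed linear combination $\sum_j (\bm{K}^{-1}\bm{A}^\top)_{kj}\,\bm{b}^j$, so by linearity of differentiation $\operatorname{Hess}\big((\bm{K}^{-1}\bm{A}^\top\bm{b})^k\big) = \sum_j (\bm{K}^{-1}\bm{A}^\top)_{kj}\,\operatorname{Hess}(\bm{b}^j)$ is once more diagonal. Contracting these against $\bm{r}$ and adding the contribution from $\operatorname{Hess}(\bm{g})$ yields a sum of diagonal matrices, which completes the argument.

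The main obstacle here is bookkeeping rather than genuine analysis: one must read the bracket correctly as the contraction of a third-order tensor against the residual vector, and then verify that left-multiplication by the constant matrix $\bm{K}^{-1}\bm{A}^\top$ preserves diagonality in the differentiation indices (it does, by linearity). I would also check the boundary index $i = n_t$, where $\bm{b}$ has no $(i{+}1)$-term, to confirm that no off-diagonal entry is introduced. Beyond these points, everything reduces to the separability of $\bm{g}$ and $\bm{b}$ already implicit in Lemma~\ref{lem:grads}, so no delicate estimate is required.
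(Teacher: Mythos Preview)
Your proposal is correct and takes essentially the same approach as the paper: both arguments hinge on the separable dependence of $\bm g^i$ on $\bm z^i$ alone and of $\bm b^i$ on $\bm z^i,\bm z^{i+1}$ separately, so every component Hessian is diagonal in the differentiation indices and this is preserved by the constant linear map $\bm K^{-1}\bm A^\top$ and by contraction against the residual. The paper simply carries this out explicitly for $n_t=3$ and displays the resulting diagonal matrix \eqref{eq:mat}, whereas you phrase it in index form for arbitrary $n_t$; the content is the same.
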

	\begin{proof}
		Without loss of generality, suppose that $n_t = 3$.
		Recall by our definition:
		\begin{align*}
		\bm{g}_{xy}(\bm{z}_{xy}) &= 
		\begin{pmatrix}
		\log(\frac{1}{| \mathcal{I}_1 |}\int_{ \mathcal{I}_1 }\exp(\bm{z}^1_{xy} \cdot \bm{E}^1_{xy}(t))dt)\\
		\log(\frac{1}{| \mathcal{I}_2|}\int_{ \mathcal{I}_2}\exp(\bm{z}^2_{xy} \cdot \bm{E}^2_{xy}(t))dt)\\
		\log(\frac{1}{| \mathcal{I}_3|}\int_{ \mathcal{I}_3}\exp(\bm{z}^3_{xy} \cdot \bm{E}^3_{xy}(t))dt)\\
		\end{pmatrix}.
		\end{align*}
		Then, recall the expression of $\bm g'_{xy}(\bm z_{xy})$ from \eqref{eq:c_exp}.
		Now to calculate the Hessian of $\bm{g}$, we need to take the derivative with respect to each variable once again. This will result in $\text{\bfseries{Hess}}(\bm{g}_{xy}(\bm{z}_{xy})) \in \mathbb{R}^{n_t \times n_t \times n_t}$ and will be of the form:
		\begin{align*}
		\text{\bfseries{Hess}}(\bm{g}_{xy}(\bm{z}_{xy})) = \begin{Bmatrix}
		\begin{pmatrix}
		(\bm{g}^1_{xy}(\bm{z}_{xy}))'' & 0 & 0\\
		0 & 0 & 0\\
		0 & 0 & 0
		\end{pmatrix}, \; \begin{pmatrix}
		0 & 0 & 0\\
		0 & (\bm{g}^2_{xy}(\bm{z}_{xy}))'' & 0\\
		0 & 0 & 0
		\end{pmatrix}, \;  \begin{pmatrix}
		0 & 0 & 0\\
		0 & 0 & 0\\
		0 & 0 & (\bm{g}_{xy}^3(\bm{z}_{xy}))''
		\end{pmatrix}
		\end{Bmatrix},
		\end{align*}  
		where $(\bm{g}^i(\bm{z}_{xy}))''$ is defined in Theorem \ref{thm:gconvex} .
		Next we consider $\text{\bfseries{Hess}}(\bm{K}^{-1} \bm{A}^\top\bm{b}_{xy}(\bm{z}_{xy}))$. 
		Observe:
		\begin{align*}
		\text{\bfseries{Hess}}(\bm{K}^{-1} \bm{A}^\top\bm{b}_{xy}(\bm{z}_{xy})) &= \bm{K}^{-1} \bm{A}^\top \cdot \text{\bfseries{Hess}}(\bm{b}_{xy}(\bm{z}_{xy}))
		\end{align*}
		where 
		\begin{align*}
		\text{\bfseries{Hess}}(\bm{b}_{xy}(\bm{z}_{xy})) = \begin{Bmatrix}
		\begin{pmatrix}(\bm{g}^1_{xy}(\bm{z}_{xy}))'' & 0 & 0\\
		0 & 0 & 0
		\end{pmatrix} , \begin{pmatrix}
		0 & -(\bm{g}^2_{xy}(\bm{z}_{xy}))'' & 0 \\
		0 &  (\bm{g}^2_{xy}(\bm{z}_{xy}))'' & 0 
		\end{pmatrix}, \begin{pmatrix}
		(0 & 0 & 0\\
		0 & 0 & \bm{g}^3_{xy}(\bm{z}_{xy}))''
		\end{pmatrix}\end{Bmatrix},
		\end{align*}
		\begin{align*}
			\text{\bfseries{Hess}}(\bm{b}_{xy}(\bm{z}_{xy})) \in \mathbb{R}^{{n_t -1} \times n_t \times n_t},
		\end{align*}	
		and 
		\begin{align*}
		\bm{K}^{-1}\bm A^\top := \begin{pmatrix}
		| & | \\
		\bm \kappa_1 & \bm \kappa_2\\
		| & |
		\end{pmatrix} \in \mathbb{R}^{n_t \times (n_t -1)} .
		\end{align*}
		Here $\bm\kappa_i$, $i=1...n_t$ represent the columns of $\bm{K}^{-1}\bm A^\top$. 
		Then we have the following:\begin{align*}
		&\text{\bfseries{Hess}}(\bm{K}^{-1} \bm{A}^\top\bm{b}_{xy}(\bm{z}_{xy}))\\
		&= \begin{Bmatrix}
		(\bm{g}^1_{xy}(\bm{z}_{xy}))'' \cdot\begin{pmatrix}
		| & 0 & 0\\
		\bm \kappa_1 & 0 & 0\\
		| & 0 & 0
		\end{pmatrix},
		(\bm{g}^2_{xy}(\bm{z}_{xy}))'' \cdot \begin{pmatrix}
		0 & | &  0\\
		0 & \bm \kappa_2 - \bm \kappa_1 &  0\\
		0 & | &  0
		\end{pmatrix},
		(\bm{g}^3_{xy}(\bm{z}_{xy}))'' \cdot\begin{pmatrix}
		0 & 0 &| \\
		0 & 0 & \bm \kappa_3\\
		0 & 0 & |
		\end{pmatrix}
		\end{Bmatrix},
		\end{align*}
		with $\text{\bfseries{Hess}}(\bm{K}^{-1} \bm{A}^\top\bm{b}_{xy}(\bm{z}_{xy}) \in \mathbb{R}^{n_t \times n_t \times n_t}$.
		Next we can add $\text{\bfseries{Hess}}(\bm{g}_{xy}(\bm{z}_{xy}))$ and
		\linebreak 
		$\text{\bfseries{Hess}}(\bm{K}^{-1} \bm{A}^\top\bm{b}_{xy}(\bm{z}_{xy}))$ to get: 
		\begin{align*}
		&\text{\bfseries{Hess}}(\bm{g}_{xy}(\bm{z}_{xy})) + \text{\bfseries{Hess}}(\bm{K}^{-1} \bm{A}^\top\bm{b}_{xy}(\bm{z}_{xy}))\\
		&=
		\begin{Bmatrix}
		(\bm{g}^1_{xy}(\bm{z}_{xy}))'' \cdot\begin{pmatrix}
		|  & 0 & 0 \\
		\bm\gamma_1 & 0 & 0 \\
		| & 0 & 0
		\end{pmatrix},
		(\bm{g}^2_{xy}(\bm{z}_{xy}))'' \cdot\begin{pmatrix}
		0 & | &  0 \\
		0 & \bm\gamma_2 & 0 \\
		0 & | &  0
		\end{pmatrix},
		(\bm{g}^3_{xy}(\bm{z}_{xy}))'' \cdot\begin{pmatrix}
		0 & 0 & |  \\
		0 & 0 & \bm\gamma_3 \\
		0 & 0 & |
		\end{pmatrix}
		\end{Bmatrix},
		\end{align*}
		where $ \bm\gamma_i \in \mathbb{R}^{n_t}, i=1, \dots, n_t$.
		Lastly let $\bm{u}_{xy}(\bm{z}_{xy}) + \bm{g}_{xy}(\bm{z}_{xy})-\bm{d}_{xy}
		= \bm\alpha_{xy} \in \mathbb{R}^{n_t}$.
		Then
		\begin{align*}
		&\big\langle\text{\bfseries{Hess}}(\bm{K}^{-1} \bm{A}^\top\bm{b}_{xy}(\bm{z}_{xy})) + \text{\bfseries{Hess}}(\bm{g}_{xy}(\bm{z}_{xy})),  
		\bm{u}_{xy}(\bm{z}_{xy}) + \bm{g}_{xy}(\bm{z}_{xy}) - \bm{d}_{xy} \big\rangle\\
		&= \begin{Bmatrix}
		(\bm{g}^1_{xy}(\bm{z}_{xy}))'' \cdot\begin{pmatrix}
		\bm\gamma_1^\top \bm\alpha_{xy} \\
		0\\
		0
		\end{pmatrix},
		(\bm{g}^2_{xy}(\bm{z}_{xy}))'' \cdot\begin{pmatrix}
		0\\
		\bm\gamma_2^\top \bm\alpha_{xy} \\
		0
		\end{pmatrix},
		(\bm{g}^3_{xy}(\bm{z}_{xy}))'' \cdot\begin{pmatrix}
		0\\
		0\\
		\bm\gamma_3^\top \bm\alpha_{xy} 
		\end{pmatrix}
		\end{Bmatrix}\\
		&\cong 	\begin{pmatrix}
		(\bm{g}^1_{xy}(\bm{z}_{xy}))'' \cdot\bm\gamma_1^\top \bm\alpha_{xy} & 0 & 0\\
		0 & (\bm{g}^2_{xy}(\bm{z}_{xy}))'' \cdot\bm \gamma_2^\top \bm\alpha_{xy} & 0 \\
		0 & 0 & (\bm{g}^3_{xy}(\bm{z}_{xy}))'' \cdot\bm \gamma_3^\top \bm\alpha_{xy}
		\end{pmatrix}.\tag{\theequation}\label{eq:mat}
		\end{align*}
	Here $\cong$ denotes the mode 1 unfolding of the $\big\langle\text{\bfseries{Hess}}(\bm{K}^{-1} \bm{A}^\top\bm{b}_{xy}(\bm{z}_{xy})) + \text{\bfseries{Hess}}(\bm{g}_{xy}(\bm{z}_{xy})),  
			\bm{u}_{xy}(\bm{z}_{xy}) + \bm{g}_{xy}(\bm{z}_{xy}) - \bm{d}_{xy} \big\rangle$ tensor. Further details of tensor matrix multiplication can be found in \cite{doi:10.1137/07070111X}.
		The proof is complete.
	\end{proof}
	Next, we establish that $\mathcal{J}$ is strictly locally convex on any finite interval when 
	$\lambda_1$ is chosen appropriately. 
	\begin{theorem}\label{thm:interval} 
		Let $\mathcal{I}_i$ denotes the $i$-$th$ exposure time interval with  
			$i = 1,\dots, n_t$. If there exists $t \in  \mathcal{I}_i$, for all $i$, with 
			${\bm E}^i_{xy}(t)$ being nonzero then for any closed ball $\Omega = 
			\overline{\mathcal{B}_\delta(0)}$ centered at 0 and radius $\delta$, 
			there exists $\lambda_1 > 0$ such that for $\bm{z}_{xy} \in \Omega$, 
			$\mathcal{J}$ is strictly convex. 
	\end{theorem}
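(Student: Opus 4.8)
The plan is to prove strict (indeed strong) convexity on $\Omega$ by exhibiting a uniform positive lower bound on the smallest eigenvalue of the Hessian $\nabla^2\mathcal{J}(\bm{z}_{xy})$ over all $\bm{z}_{xy}\in\Omega$; since $\Omega$ is a convex set this suffices. Writing $\bm\alpha_{xy}(\bm{z}_{xy}) := \bm{u}_{xy}(\bm{z}_{xy}) + \bm{g}_{xy}(\bm{z}_{xy}) - \bm{d}_{xy} \in \mathbb{R}^{n_t}$ as in Lemma~\ref{lem:diag}, so that $\mathcal{J}(\bm{z}_{xy}) = \tfrac12\|\bm\alpha_{xy}(\bm{z}_{xy})\|_2^2 + \tfrac{\lambda_1}{2}\|\bm{z}_{xy}\|_2^2$, I would differentiate twice (the first derivative being supplied by Lemma~\ref{lem:grads}) to obtain the decomposition
\[
\nabla^2\mathcal{J}(\bm{z}_{xy}) = (\bm\alpha_{xy}'(\bm{z}_{xy}))^\top\,\bm\alpha_{xy}'(\bm{z}_{xy}) + \bm{D}(\bm{z}_{xy}) + \lambda_1\bm{I},
\]
where $\bm\alpha_{xy}'(\bm{z}_{xy}) = \bm{K}^{-1}\bm{A}^\top\bm{b}_{xy}'(\bm{z}_{xy}) + \bm{g}_{xy}'(\bm{z}_{xy})$ is the Jacobian and $\bm{D}(\bm{z}_{xy})$ is exactly the contraction $\langle \text{\bfseries{Hess}}(\bm{K}^{-1}\bm{A}^\top\bm{b}_{xy}(\bm{z}_{xy})) + \text{\bfseries{Hess}}(\bm{g}_{xy}(\bm{z}_{xy})),\, \bm\alpha_{xy}(\bm{z}_{xy})\rangle$ studied in Lemma~\ref{lem:diag}.

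The first summand is a Gram matrix, hence positive semidefinite for every $\bm{z}_{xy}$, so it can only improve definiteness and may be discarded. By Lemma~\ref{lem:diag} the middle term $\bm{D}(\bm{z}_{xy})$ is diagonal, with entries of the form $(\bm{g}^i_{xy}(\bm{z}_{xy}))''\,(\bm\gamma_i^\top\bm\alpha_{xy}(\bm{z}_{xy}))$, where the vectors $\bm\gamma_i$ are fixed (built from the columns of the constant matrix $\bm{K}^{-1}\bm{A}^\top$ together with the standard basis directions) and therefore independent of $\bm{z}_{xy}$. The heart of the proof is a uniform bound on these entries over $\Omega$. Here I would invoke Theorem~\ref{thm:gconvex}: under the nonzero hypothesis on $\bm{E}^i_{xy}$, each $(\bm{g}^i_{xy}(\bm{z}_{xy}))''$ is bounded above by $\max_{t,s\in\mathcal{I}_i}\tfrac12[\bm{E}^i_{xy}(t) - \bm{E}^i_{xy}(s)]^2$, a constant independent of $\bm{z}_{xy}$. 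Since $\bm\alpha_{xy}$ is continuous in $\bm{z}_{xy}$ and $\Omega$ is compact, $\|\bm\alpha_{xy}(\bm{z}_{xy})\|_2$ is bounded on $\Omega$; combining these gives a constant $M = M(\delta)$ with $|\bm{D}_{ii}(\bm{z}_{xy})| \le M$ for all $i$ and all $\bm{z}_{xy}\in\Omega$.

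Finally I would take $\lambda_1 > M$. Because $\bm{D}(\bm{z}_{xy})$ is diagonal with entries at least $-M$, we have $\bm{D}(\bm{z}_{xy}) + \lambda_1\bm{I} \succeq (\lambda_1 - M)\bm{I}$, and adding the positive semidefinite Gauss--Newton term preserves this, so that
\[
\nabla^2\mathcal{J}(\bm{z}_{xy}) \succeq (\lambda_1 - M)\,\bm{I} \succ 0 \qquad\text{for all } \bm{z}_{xy}\in\Omega.
\]
A Hessian that is uniformly positive definite on the convex set $\Omega$ yields strong, and in particular strict, convexity of $\mathcal{J}$ there. I expect the main obstacle to be the uniform bound in the second step: one has to confirm that \emph{every} factor entering $\bm{D}_{ii}$ --- both the curvature $(\bm{g}^i_{xy})''$ and the reduced residual $\bm\alpha_{xy}$ --- stays bounded as $\bm{z}_{xy}$ ranges over $\Omega$, which is precisely where compactness of the ball and the $\bm{z}_{xy}$-independent estimate from Theorem~\ref{thm:gconvex} do the work.
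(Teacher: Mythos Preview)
Your proposal is correct and follows essentially the same route as the paper: decompose $\nabla^2\mathcal{J}$ into a positive semidefinite Gauss--Newton part, the diagonal term $\bm{D}$ from Lemma~\ref{lem:diag}, and $\lambda_1\bm{I}$, then choose $\lambda_1$ to dominate the diagonal entries of $\bm{D}$. The only difference is that where you invoke continuity and compactness of $\Omega$ to assert the existence of a bound $M(\delta)$ on $|\bm{D}_{ii}|$, the paper works out an explicit constant in terms of $\delta$, $\|\bm{K}^{-1}\bm{A}^\top\|_\infty$, $\|\bm{d}_{xy}\|_\infty$, and the quantities $M_1,M_2$ coming from $\bm{E}^i_{xy}$, thereby giving a computable lower bound for $\lambda_1$; your soft argument is perfectly adequate for the stated theorem.
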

	\begin{proof}
		First we calculate \textbf{Hess}($\mathcal{J}$) as: 
		\begin{align*}
		\text{\bfseries Hess}(\mathcal{J}) 
		&= \big\langle \; \bm{u}_{xy}'(\bm{z}_{xy}) + \bm{g}_{xy}'(\bm{z}_{xy})), \bm{u}_{xy}'(\bm{z}_{xy}) 
		+ \bm{g}_{xy}'(\bm{z}_{xy})) \; \big\rangle \\ 
		&\quad+ \big\langle \text{\bfseries{Hess}}(\bm{K}^{-1} \bm{A}^\top\bm{b}_{xy}(\bm{z}_{xy})) + \textbf{Hess}(\bm{g}_{xy}(\bm{z}_{xy})),  
		\bm{u}_{xy}(\bm{z}_{xy}) + \bm{g}_{xy}(\bm{z}_{xy}) - \bm{d}_{xy} \big\rangle + \lambda_1 \bm{I}.
		\end{align*}
		We notice that the term $\big\langle \; \bm{u}_{xy}'(\bm{z}_{xy}) + \bm{g}_{xy}'(\bm{z}_{xy})), \bm{u}_{xy}'(\bm{z}_{xy}) + \bm{g}_{xy}'(\bm{z}_{xy})) \; \big\rangle$ is of them form $\bm{M}^\top\bm{M}$, which implies that it is symmetric positive semi-definite. Next, from Lemma~\ref{lem:diag} we recall that the term $\big\langle \text{\bfseries{Hess}}(\bm{K}^{-1} \bm{A}^\top\bm{b}_{xy}(\bm{z}_{xy})) +\textbf{Hess}(\bm{g}_{xy}(\bm{z}_{xy})),  \bm{u}_{xy}(\bm{z}_{xy}) + \bm{g}_{xy}(\bm{z}_{xy}) - \bm{d}_{xy} \big\rangle$ is a diagonal matrix. In the case where the diagonal entries (eigenvalues) are greater than 0, then we are done. Suppose some eigenvalue is less than 0 and let 
		$ \bm{z}_{xy}\in \Omega := \overline{\mathcal{B}_\delta(0)} $ which implies $ \|\bm{z}_{xy}\|_\infty \leq \delta$. 
		Given $\delta > 0$, choose $\lambda_1 > 0$ such that
			\[
			0 < \delta 
			< \frac{\lambda_1 - (\|\bm\gamma\|_\infty \cdot M_1 \cdot \| \bm d_{xy}\|_\infty)(2\|\bm{K}^{-1}\bm{A}^\top\|_\infty + 1)}{(\|\bm\gamma\|_\infty \cdot M_1 \cdot M_2)(2\|\bm{K}^{-1}\bm{A}^\top\|_\infty ) + 1)} .
			\]
			Where $M_2 = \max\limits_i \bigg\{\big|\min\limits_{t \in  \mathcal{I}_i} \{\bm{E}^i_{xy}(t)\}\big|, \max\limits_{t \in  \mathcal{I}_i}\big\{\bm{E}^i_{xy}(t) \big\} \bigg\}$ and $M_1 = \max\limits_i \biggl\{\max\limits_{t,s\in  \mathcal{I}_i}\bigg\{\frac{1}{2}\big[\bm{E}^i_{xy}(t) - \bm{E}^i_{xy}(s)\big]^2\bigg\}\biggr\} .$
			Recall from Theorem \ref{thm:gconvex} that $0 < (\bm{g}^i_{xy}(\bm{z}_{xy}))'' \leq \max\limits_{t,s\in  \mathcal{I}_i}\bigg\{\frac{1}{2}\big[\bm{E}^i_{xy}(t) - \bm{E}^i_{xy}(s)\big]^2\bigg\}$, which implies $(\bm{g}^i_{xy}(\bm{z}_{xy}))'' \leq M_1$ for any $i$. 
			Then using \ref{eq:mat} we have the following: 
			\begin{align*}
			&\|\big\langle \text{\bfseries{Hess}}(\bm{K}^{-1} \bm{A}^\top\bm{b}_{xy}(\bm{z}_{xy})) + \textbf{Hess}(\bm{g}_{xy}(\bm{z}_{xy})),  \bm{u}_{xy}(\bm{z}_{xy}) + \bm{g}_{xy}(\bm{z}_{xy}) - \bm{d}_{xy} \big\rangle \|_\infty \\		
			&\leq
			\begin{Vmatrix}
			\begin{pmatrix}
			(\bm{g}^1_{xy}(\bm{z}_{xy}))'' \cdot\bm\gamma_1^\top \bm\alpha_{xy} & 0 & 0 &  0\\
			0 & (\bm{g}^2_{xy}(\bm{z}_{xy}))'' \cdot\bm \gamma_2^\top \bm\alpha_{xy} & 0 &  0 \\
			\vdots &  \vdots & \ddots & \vdots\\
			0 &  0 & \dots & (\bm{g}^{n_t}_{xy}(\bm{z}_{xy}))'' \cdot \bm{\gamma}_{n_t}^\top \bm\alpha_{xy}
			\end{pmatrix}\\
			\end{Vmatrix}_\infty
			 \\ 
			&\leq M_1 \cdot \|\bm\gamma\|_\infty \bigg(\|\bm{u}_{xy}(\bm{z}_{xy})\|_\infty + \|\bm{g}_{xy}(\bm{z}_{xy})\|_\infty + \|\bm{d}_{xy}\|_\infty\bigg)\\ 
			& \leq M_1 \cdot \|\bm\gamma\|_\infty \bigg(\|\bm{K}^{-1}\bm{A}^\top\bm{b}_{xy}\|_\infty + M_2 \|\bm{z}_{xy}\|_\infty + \|\bm{d}_{xy}\|_\infty \bigg)\\
			& \leq M_1 \cdot \|\bm\gamma\|_\infty \bigg(\|\bm{K}^{-1}\bm{A}^\top\|_\infty \|\bm{b}_{xy}\|_\infty + M_2 \|\bm{z}_{xy}\|_\infty + \|\bm{d}_{xy}\|_\infty \bigg)\\
			& \leq M_1 \cdot \|\bm\gamma\|_\infty \bigg(\|\bm{K}^{-1}\bm{A}^\top \|_{\infty} \; (2\|\bm d_{xy}\|_\infty +2M_2 \|\bm{z}_{xy}\|_\infty) + M_2 \|\bm{z}_{xy}\|_\infty + \|\bm{d}_{xy}\|_\infty \bigg)\\
			&< \lambda_1.
			\end{align*}
			Here 
			\[\bm{\gamma} := \begin{pmatrix}
			|  & |  & \dots&  |\\
			\bm\gamma_1 & \bm\gamma_2 & \dots& \bm\gamma_{n_t}\\
			| & |    & \dots &  |
			\end{pmatrix}^\top,\] $\bm{\gamma}_i$ and $\bm{\alpha}_{xy}$ are defined in Equation \ref{eq:mat} .
			Notice that $M_2$ is derived from bounds on $\bm{g}_{xy}(\bm{z}_{xy})$ as follows:
			\begin{align*}
			\bm{g}^i_{xy}(\bm{z}_{xy}) = \log\bigg(\frac{1}{| \mathcal{I}_i|}\int_{ \mathcal{I}_i}\exp (\bm{z}^i_{xy}\cdot \bm{E}^i_{xy}(t))dt\bigg) \leq \bm{z}^i_{xy}\cdot \max\big\{\bm{E}^i_{xy}(t)\big\}, \text{ where } \bm{z}^i_{xy}\cdot \bm{E}^i_{xy}(t)  \geq 0
			\end{align*}
			and
			\begin{align*}
			\bm{g}^i_{xy}(\bm{z}_{xy}) &= \log\bigg(\frac{1}{| \mathcal{I}_i|}\int_{ \mathcal{I}_i}\exp(\bm{z}^i_{xy}\cdot \bm{E}^i_{xy}(t))dt\bigg) \geq \bm{z}^i_{xy}\cdot \min\big\{\bm{E}^i_{xy}(t)\big\}, \text{ where } \bm{z}^i_{xy}\cdot \bm{E}^i_{xy}(t) \leq 0\\
			&\implies |\bm{g}^i_{xy}(\bm{z}_{xy})| \leq 		| \bm{z}^i_{xy}\cdot M_2 |.
			\end{align*}	
			Recall $\big\langle \text{\bfseries{Hess}}(\bm{K}^{-1} \bm{A}^\top\bm{b}_{xy}(\bm{z}_{xy})) + \textbf{Hess}(\bm{g}_{xy}(\bm{z}_{xy})),  \bm{u}_{xy}(\bm{z}_{xy}) + \bm{g}_{xy}(\bm{z}_{xy}) - \bm{d}_{xy} \big\rangle$ is diagonal from \ref{eq:mat} . Denote the diagonal entries as $\sigma_i$, $i=1, \dots, n_t$. Without loss of generality let $|\sigma_1| \geq |\sigma_i|$, $i=2, \dots, n_t$.
				Then we have the following: 	
				\[
				|\sigma_i| \leq |{\sigma_1}|= \|\big\langle \text{\bfseries{Hess}}(\bm{K}^{-1} \bm{A}^\top\bm{b}_{xy}(\bm{z}_{xy})) + \textbf{Hess}(\bm{g}_{xy}(\bm{z}_{xy})),  \bm{u}_{xy}(\bm{z}_{xy}) + \bm{g}_{xy}(\bm{z}_{xy}) - \bm{d}_{xy} \big\rangle \|_\infty < \lambda_1.
				\]
				Thus $\sigma_i < \lambda_1$ for all $i$. Furthermore $\sigma_i + \lambda_1 > 0$ for all $i$. 
				We notice \[
				\big\langle \text{\bfseries{Hess}}(\bm{K}^{-1} \bm{A}^\top\bm{b}_{xy}(\bm{z}_{xy})) + \textbf{Hess}(\bm{g}_{xy}(\bm{z}_{xy})),  \bm{u}_{xy}(\bm{z}_{xy}) + \bm{g}_{xy}(\bm{z}_{xy}) - \bm{d}_{xy} \big\rangle + \lambda_1 \bm{I}
				\] 
				is symmetric positive definite since it is a diagonal matrix with entries: $\sigma_i + \lambda_1 > 0$. Hence \textbf{Hess}($\mathcal{J}$) is symmetric positive definite due to it being the summation of a symmetric positive definite matrix and a symmetric positive semi-definite matrix.
				We conclude that for any convex set $\Omega$ there exists $\lambda_1$ such that $\mathcal{J}$ is strictly convex over $\Omega$.
	\end{proof}
	We further note that from Theorem \ref{thm:interval} once an interval is chosen a lower bound for $\lambda_1$ can be calculated by:
		\[
		\lambda_1 
		> \|\bm\gamma\|_\infty \cdot M_1 \bigg(\|\bm{K}^{-1}\bm{A}^\top \|_{\infty} \; (2\|\bm d_{xy}\|_\infty +2M_2 \|\bm{z}_{xy}\|_\infty) + M_2 \|\bm{z}_{xy}\|_\infty + \|\bm{d}_{xy}\|_\infty \bigg) .
		\]

\section{Experimental Setup}\label{s:numres}

This section focuses on how to prepare the dataset to be used in our computations.

\subsubsection{Standardizing $\bm{B^i}$}
	
	Depending on the event camera used, the entries of the matrix $\bm{B^i}$ from \eqref{eq:B}, where $i$ identifies a unique standard camera image in the sequence being considered, may take on various ranges to include $[0,255]$ or $[-255,255]$. We standardize $\bm{B^i}$ using MATLAB function mat2gray. This function will map the values of $\bm{B^i}$ to the range $[0,1]$ on a log scale. That being said since we define the matrix $\bm{d}^i = \log(\bm{B^i})$ we have to be careful not to allow $\bm{B^i}$ to have any zero values. We can easily protect against this by inserting a manual range into the mat2gray function of  $[\min{\bm{B^i}} -\epsilon, \max{\bm{B^i}}+\epsilon]$, for a sufficiently small $\epsilon$. In our computations we set $\epsilon = 1e-3$. We note $\min \bm B^i$ and $\max \bm{B}^i$ are the minimum and maximum values over all values in the matrix $\bm{B}^i$, respectively.

\subsubsection{Building the data cube} \label{s:cube}
In order to view the problem in three dimensions, we create a data cube as shown in Figure $\eqref{f:ex1}$. 
	As stated in \eqref{eq:cube} we define this cube by:
\begin{equation*}
{\bm{EC}}_{xy}^{s_j} = p_{x,y}^{s_j}.
\end{equation*}
Since the event data is generated on a nearly continuous basis, the data cube will be of size $\mathbb{R}^{\nu \times {n_x} \times {n_y}}$ where $\nu := \sum\limits_i n_i$ is the total number of events used in the reconstruction over all images. Note that $n_i$ is explained in further detail in \ref{eq:cube} . On average our image reconstructions use approximately $\nu \approx25,000$ events. It is impractical numerically to use this data cube due to its size. This issue is resolved by reducing the size of the data cube via time compression. Choose some $r,k \in \mathbb{N}$ such that $\nu \approx r\cdot k  $ and define a new data cube as follows:
	\begin{align*}
		\bm{dc}^{\pi} = \sum_{\omega = 1 + (\pi-1)\cdot k}^{\pi \cdot k} \bm{EC}^{\omega}
	\end{align*}
 Then we have $\bm{dc} \in \mathbb{R}^{r \times {n_x} \times {n_y}}$ where $r \approx \frac{\nu}{k}$, and $\pi = 1, \dots, r$. For our examples we have chosen $k \approx 200$.

\subsubsection{Calculate $\bm{g}(\bm{z})$} 
Once we have built the data cube then we can calculate the function $\bm{g}(\bm{z})$ which is defined componentwise as:  
\[
\bm{g}_{xy}^i(\bm{z}_{xy}) = \log \bigg( \frac{1}{| \mathcal{I}_i | }\int_{ \mathcal{I}_i} \exp \big(\bm{z}^i_{xy} \cdot \bm{E}^i_{xy}(\tau, t_i)\big)dt \bigg), \quad i=1, \dots, n_t.
\]

Numerically we approximate this using the trapezoid rule as: 
\[
\bm{g}_{xy}^i(\bm{z}_{xy}) \approx \log \bigg( \frac{1}{| \mathcal{I}_i | } \sum_{l=2}^{r}\big(\frac{\triangle \tau_{l-1}}{2}\big)\big(\exp \big(\bm{z}^i_{xy} \cdot \bm{E}^i(\tau_{l-1}, t_i)\big) + \exp \big(\bm{z}^i_{xy} \cdot \bm{E}^it_{xy}(\tau_{l}, t_i)\big)\big) \bigg) ,
\]
where $t_i$ is defined in \eqref{eq:Exy} and $\Delta \tau_{l-1} = \tau_l - \tau_{l-1}$ is the step length.  We evaluate the function $\bm{b}(\bm{z})$ as well as the derivatives of $\bm{b}(\bm{z})$ and $\bm{g}(\bm{z})$ in a similar way. 

\subsubsection{Omitting unnecessary calculations for $\bm{z}$} 
We recall that $\bm{z}$ must be calculated on a per pixel basis. To decrease computation time we only perform an optimization for $\bm{z}_{xy}$ over the pixels for which event data has been captured. 

\subsubsection{Image Reconstruction} \label{s:IR}
As shown in Figure~\ref{f:ex1} our method is capable of generating two image representations. Given some value ${\bm{z}}$ the $\bm{u}$ representation is given as: $\bm{u}({\bm{z}})$ defined in $\eqref{eq:u}$. The $\bm{u}$ representation reconstructs only the dynamical part of the image. This is due to our definition of $\bm{b}$ in \eqref{eq:b}. In order to reconstruct the full image including the portion of the image with no dynamics, we use \[\bm{v}_{xy}^i = \bm{d}_{xy}^i - \bm{g}_{xy}^i({\bm{z}}),\] a variation of the definition given in \eqref{eq:EDI}. In general we will refer to the latter formulation as the reconstruction, and will specifically denote the $\bm{u}$ representation when presented. 

\section{Numerical Examples}
\label{s:nex}

Next, we present a series of examples which establishes that the proposed approach
could be beneficial in practice.

\subsection{Benchmark Problem: Unit Bump}

The first example is a synthetic case where
we consider a blurred unit bump, see Figure~\ref{f:ex4} . In order to perform this analysis, we require a baseline image for comparison, a series of consecutive images as input for the ESIM generator \cite{Gehrig_2018} and our proposed method, a blurry image, and the corresponding event data.  To begin, we will construct a series of consecutive images by first creating a white circle on a black background (not shown). Using this initial image, we move the white circle two pixels to the right and two pixels down eight times. This set of nine images completes our series of images. We now generate a tenth image, the blurry image (middle in Figure~\ref{f:ex4}), by averaging the sequence of nine images. The fifth image of the set is designated as our baseline image (left in Figure~\ref{f:ex4}). Lastly, to generate the associated event data, we use the ESIM generator from \cite{Gehrig_2018}. The ESIM generator takes a series of images as input, and will output event data. Recall that our model requires a series of images as well as event data in order to generate a reconstructed image. We set $\bm{B}^1$ to be the fourth image of the nine image set, $\bm{B}^2$ to be the blurry image, and $\bm{B}^3$ to be the sixth image. The result of our method (right in Figure~\ref{f:ex4}) is compared to the baseline image. As shown, we obtain a high quality reconstruction with SSIM of 0.96 and PSNR of 27.3. This example serves 
a benchmark for us to apply our approach on the event based dataset.
\begin{figure}[!htb]
	\centering
	\includegraphics[width=.2\textwidth]{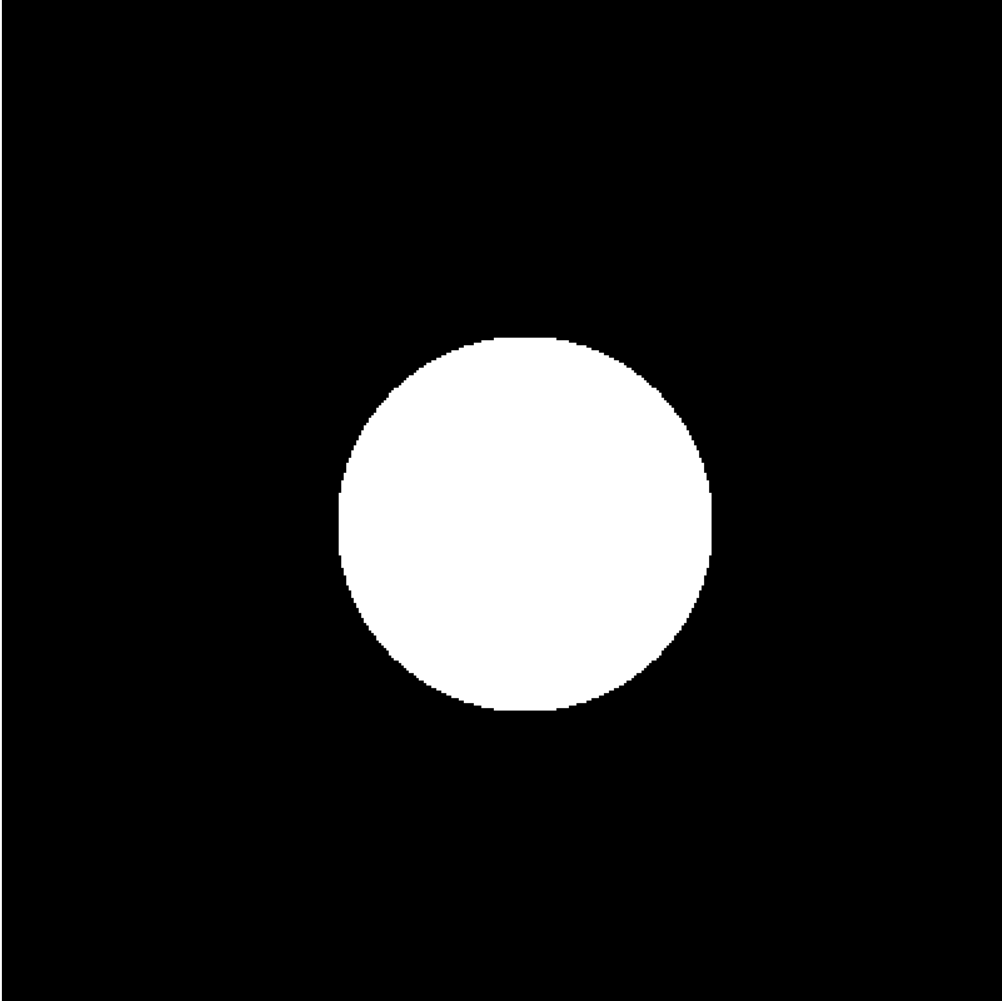} \qquad
	\includegraphics[width=.2\textwidth]{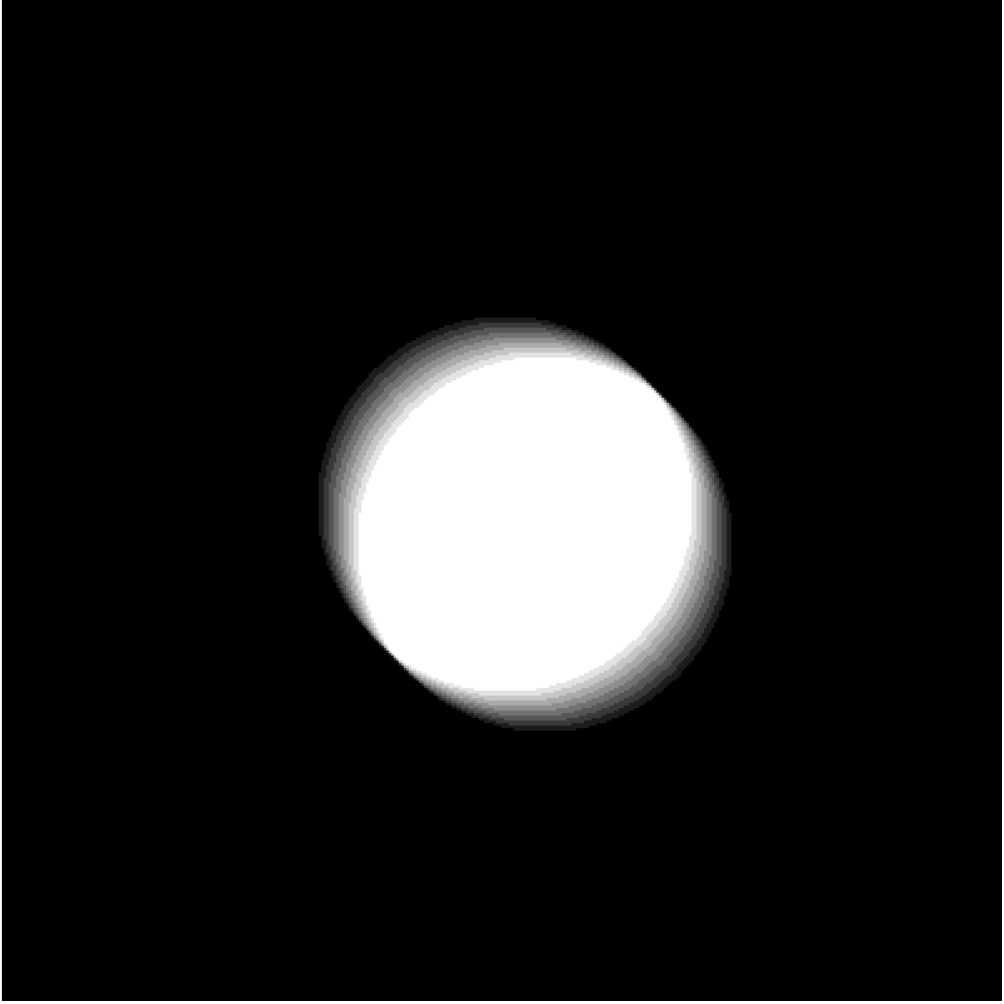} \qquad
	\includegraphics[width=.2\textwidth]{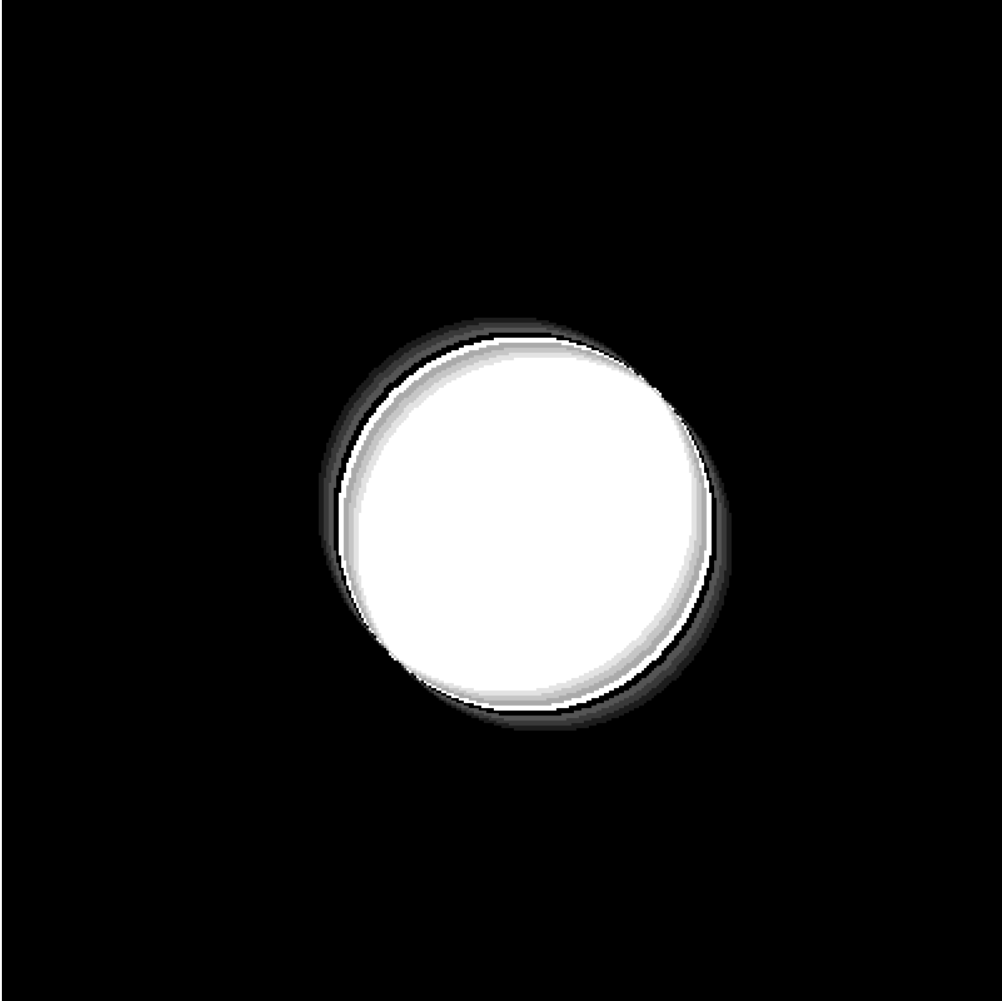}
	\caption{From left to right we have our baseline image, a simulated blurry image, and finally our reconstruction. Events for this reconstruction were produced using the ESIM generator from \cite{Gehrig_2018}. Using parameters $\lambda_1 = 1$ and $\lambda_2 = 1e-3$, we achieve an SSIM value of 0.96 and a PSNR value of 27.3.  }
	\label{f:ex4}
\end{figure}

\subsection{Multiple Event Based Dataset Examples}
Figures~\ref{f:ex1}-\ref{f:ex3} show the application of our method to various examples. In each of these examples three standard camera images are used to define the $\bm{B}$ variable, while the $\bm{g}$ and $\bm{b}$ variables are calculated using the associated event data. In each of these examples the $\bm{B}^2$ image was chosen to be a blurry image. We will refer to the $\bm{B}^2$ image as the blurry image for the remainder of this section.

In Figure \ref{f:ex1} for the chosen three image sequence $\approx 39,000$ events were recorded. The blurry image is shown (left) while our reconstruction is shown (right). Our reconstruction deblurs the fence-line as well as captures the grass and tree texture that is not apparent in the blurred image.
The blurred image (left) in Figure \ref{f:ex2} shows a person swinging a pillow. Approximately 26,000 events were captured for this example. Our model has successfully reconstructed the boundary of the pillow as well as the persons hand and thumb.
Figure \ref{f:ex3} is an example of a person jumping and the three image sequence used has $\approx$ 16000 associated events. Notice in the blurred image (left) the persons arm is barely visible. In contrast our model is able to reconstruct the persons arm (right). 
\begin{figure}[!htb]
	\centering
	\includegraphics[width=.24\textwidth]{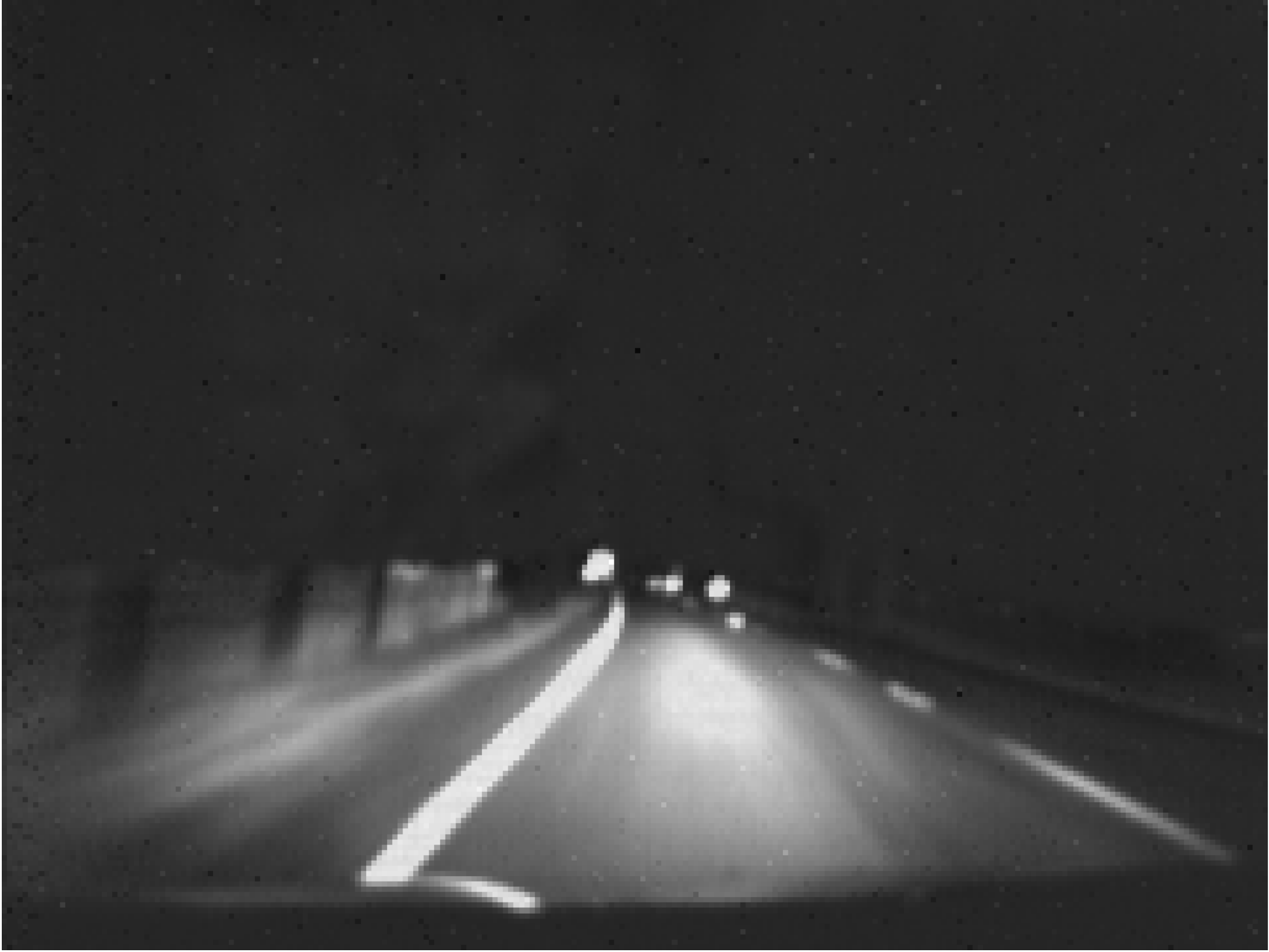}
	\includegraphics[width=.24\textwidth]{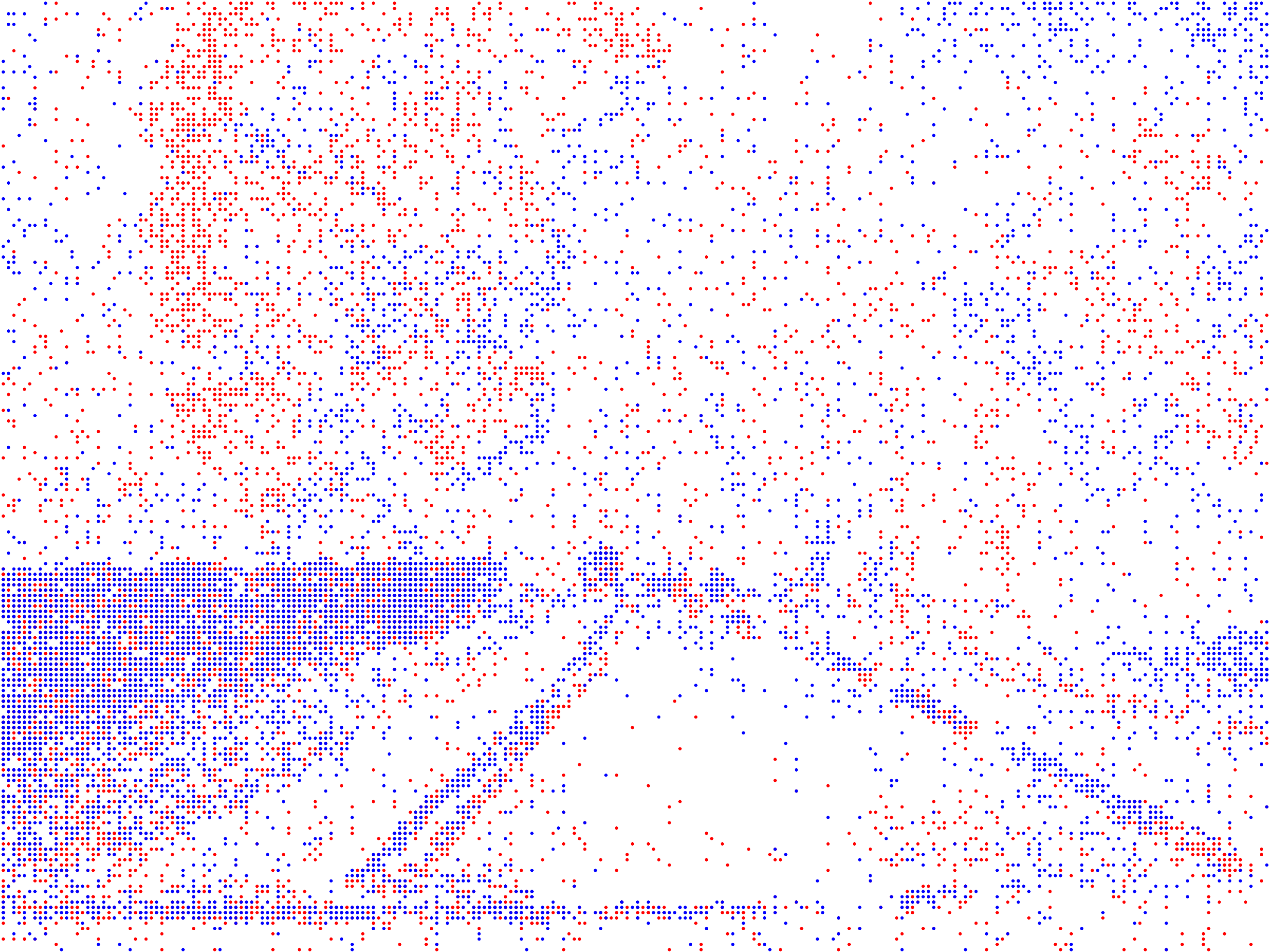}
	\includegraphics[width=.24\textwidth]{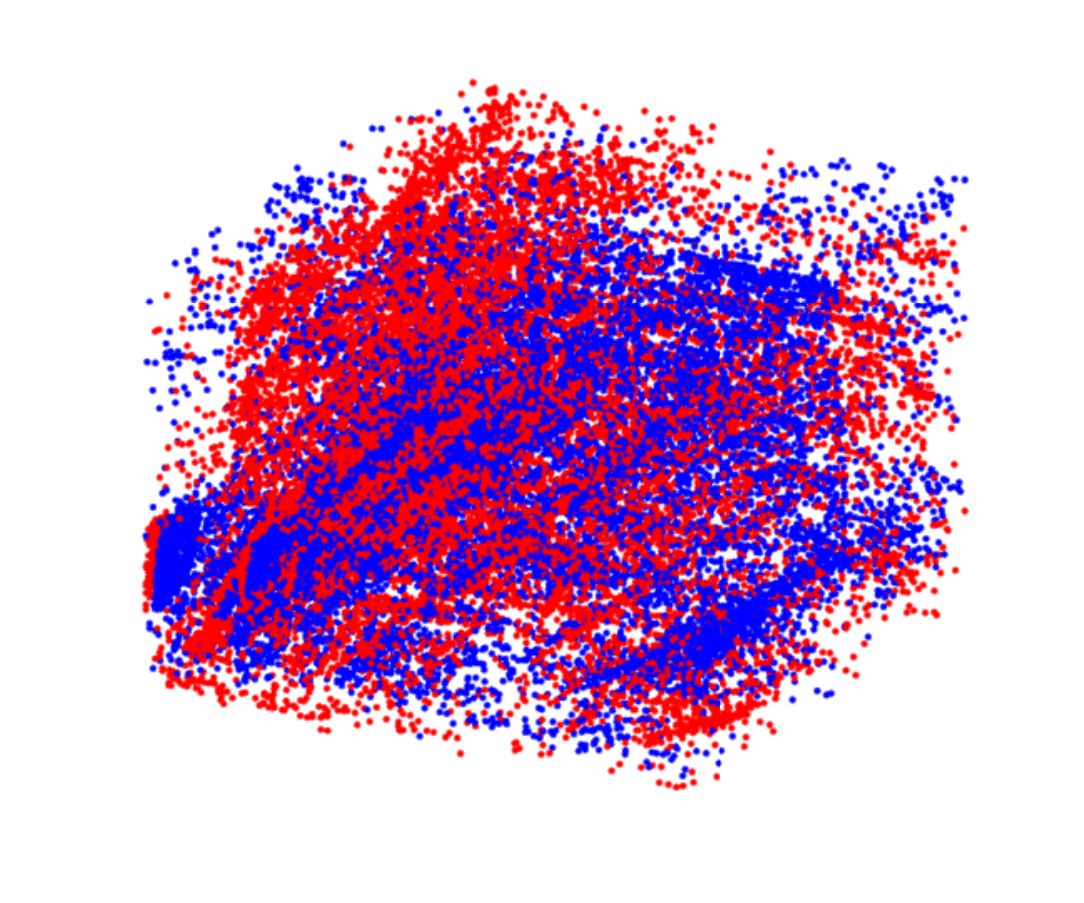}
	\includegraphics[width=.24\textwidth]{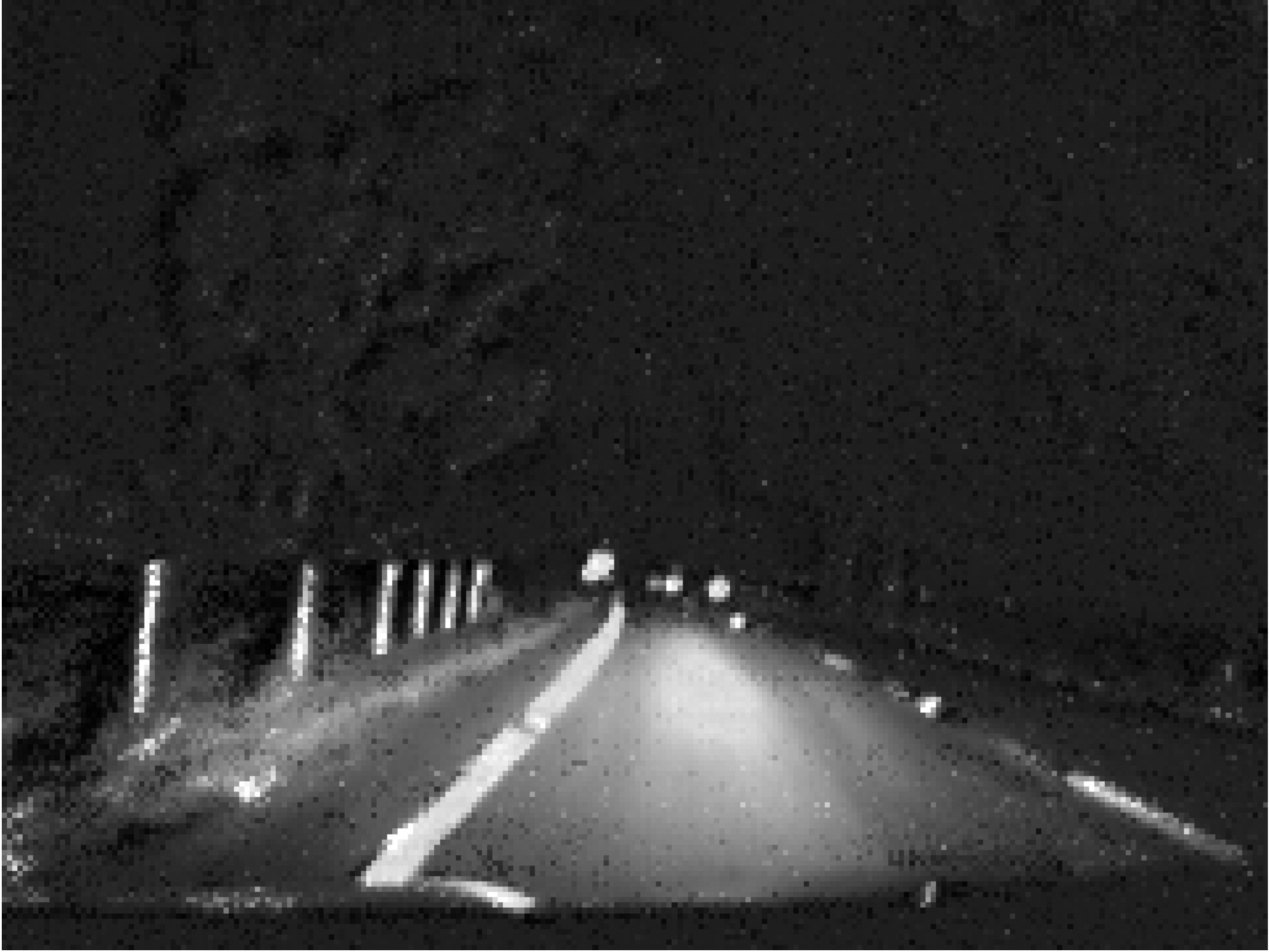}
	\caption{Example of our method with a night drive data set. From left to right we have a standard camera image that contains motion blur, a 2D representation of the event data color coded red for positive events and blue for negative events, a 3D representation of the events over time, and finally the reconstructed image using our bilevel optimization method. Here, we have used $\lambda_1 =1$ and $\lambda_2 = 1e-3$.  }
	\label{f:ex1}
\end{figure}

\begin{figure}[!htb]
	\centering
	\includegraphics[width=.32\textwidth]{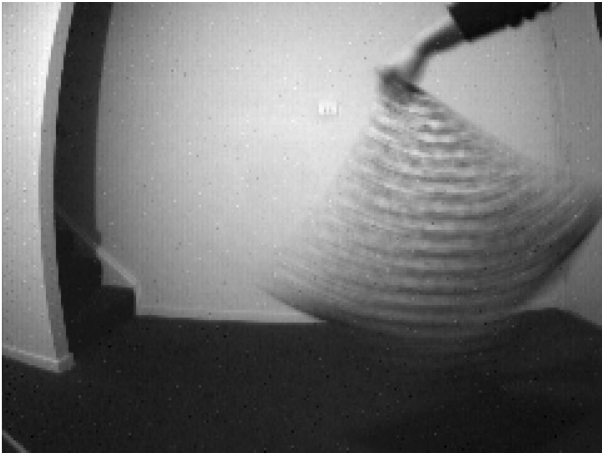}
	\includegraphics[width=.32\textwidth]{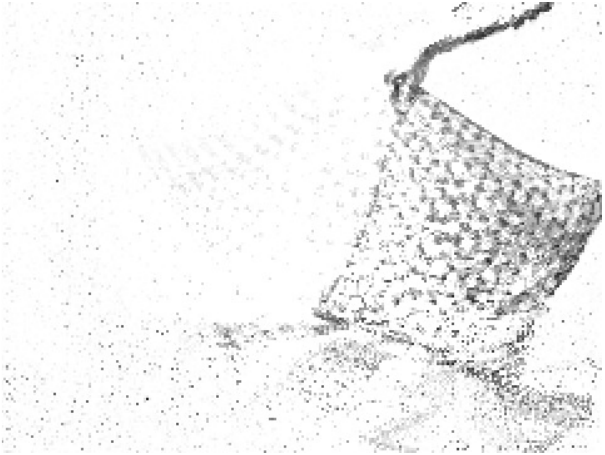}
	\includegraphics[width=.32\textwidth]{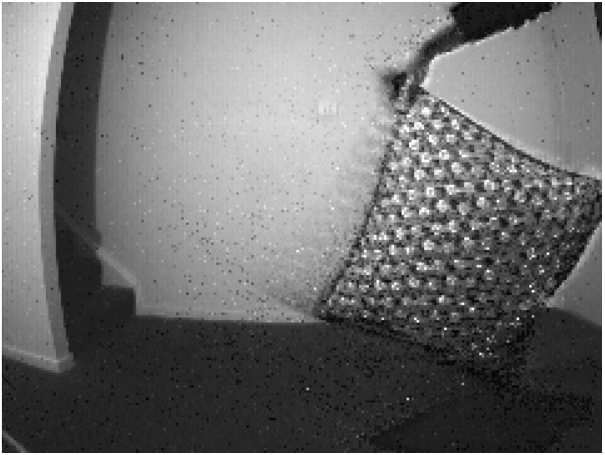}
	\caption{From left to right we have the original blurred image from a standard camera, the $\bm{u}$ representation, and the reconstruction using our bilevel method. This reconstruction was generated with $\lambda_1 = 0.5$ and $\lambda_2 = 1e-3$. }
	\label{f:ex2}
\end{figure}
\begin{figure}[!htb]
	\centering
	\includegraphics[width=.31\textwidth]{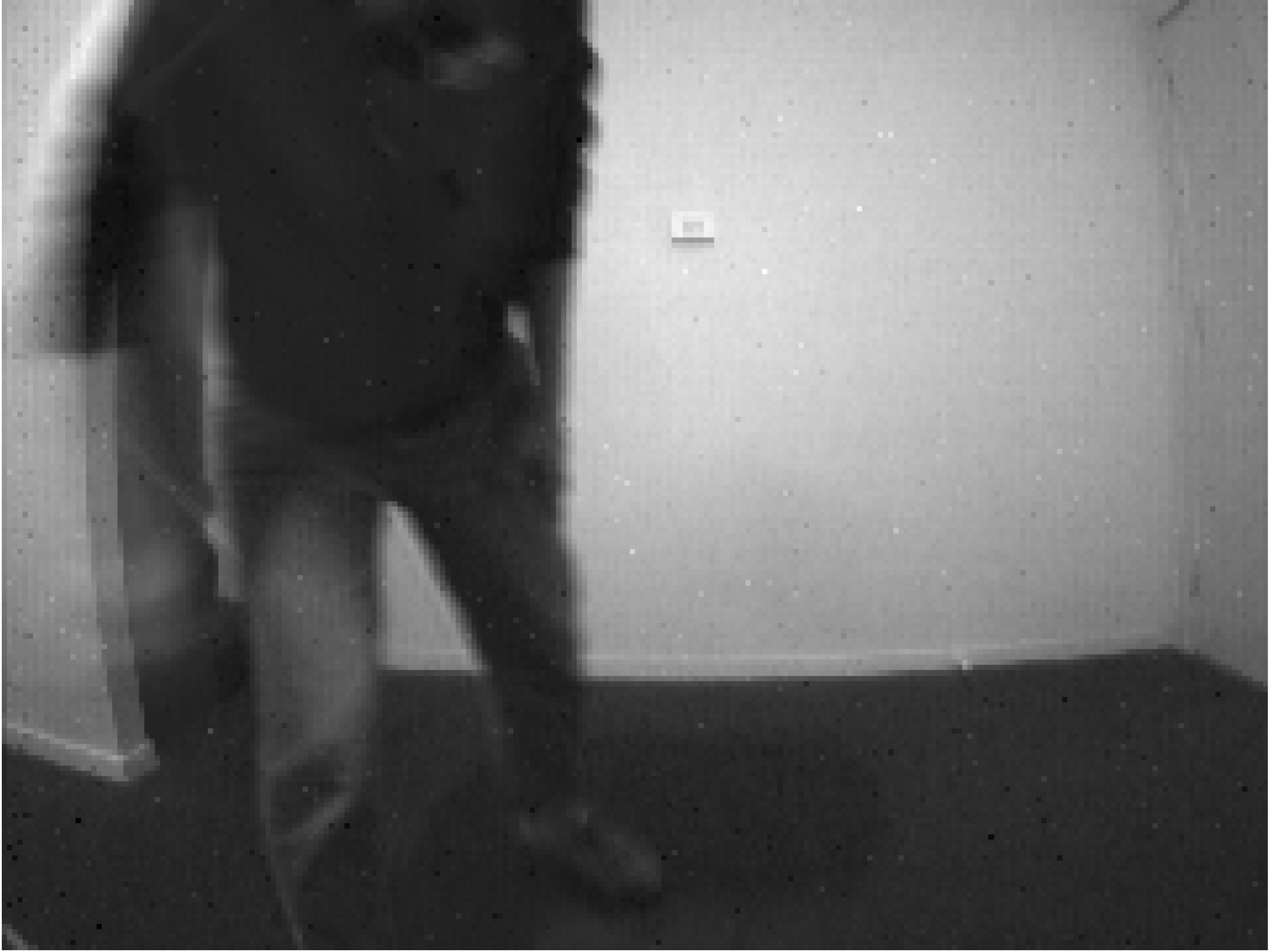}
	\includegraphics[width=.31\textwidth]{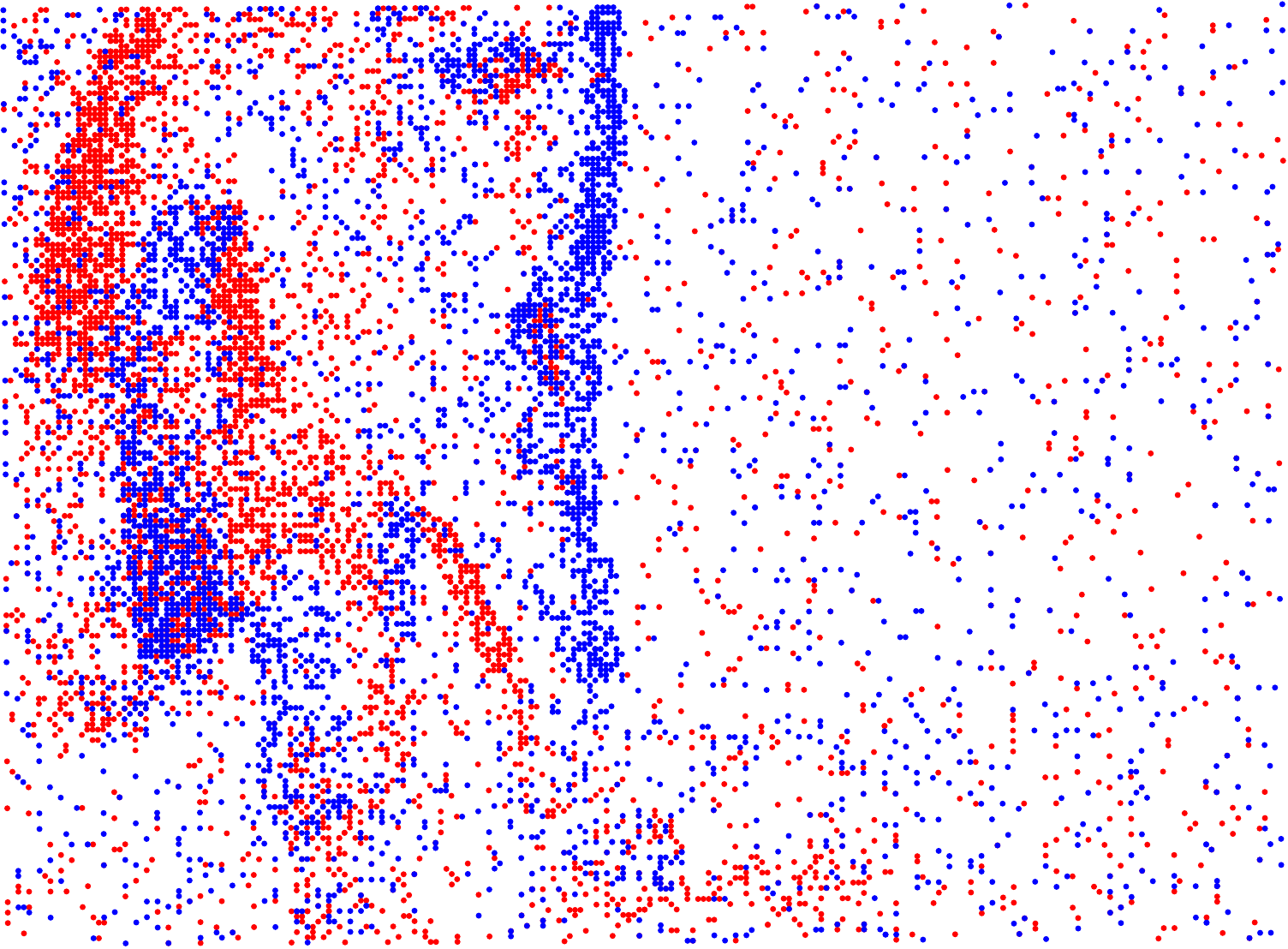}
	\includegraphics[width=.31\textwidth]{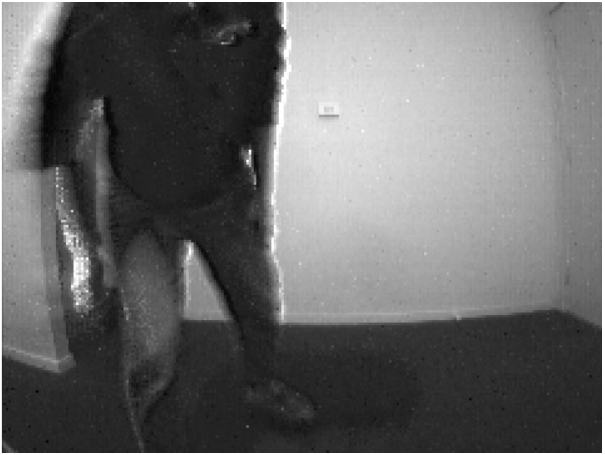}
	\caption{From left to right we have a standard blurry camera image, the recorded events, and finally our reconstructed image. This reconstruction was generated with $\lambda_1 = 1.5$ and $\lambda_2 = 1e-3$.}
	\label{f:ex3}
\end{figure}

\subsection{Event Based Problem: Shadowing Effects}\label{s:shadow}
As noted in Section \ref{s:existing} one of the limitations to the mEDI model is the appearance of shadows in a high contrasting environment. Examples of this shadowing effect can be seen in \cite{pan2019bringing} and Figures \ref{f:nrun} and \ref{f:nrun2}. As shown in Figures \ref{f:nrun} and \ref{f:nrun2} our model is able to substantially reduce the shadowing effects of the EDI and mEDI models.\begin{figure}[!htb]
	\centering
	\includegraphics[width=.25\textwidth]{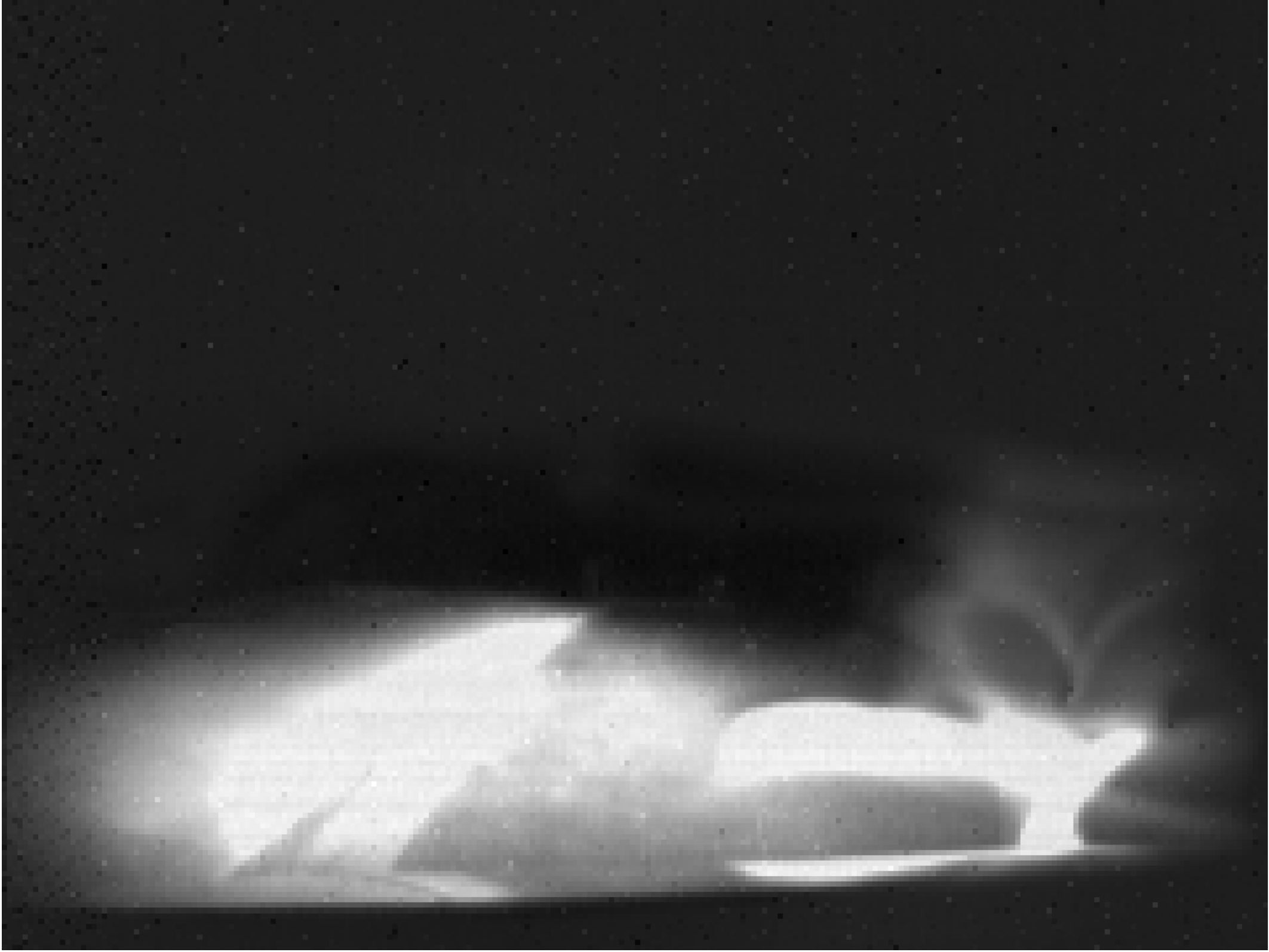}  \qquad
	\includegraphics[width=.25\textwidth]{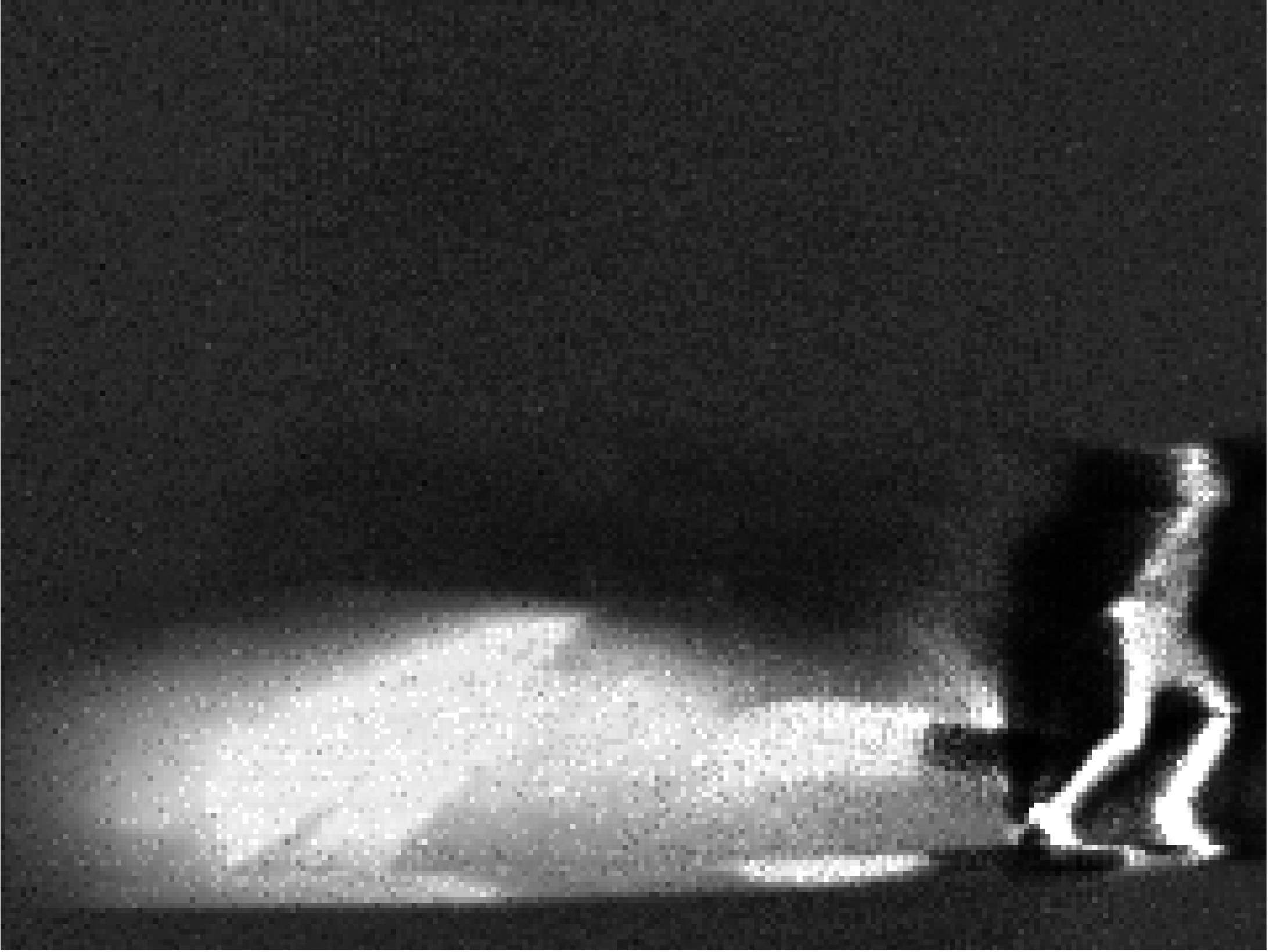} \qquad
	\includegraphics[width=.25\textwidth]{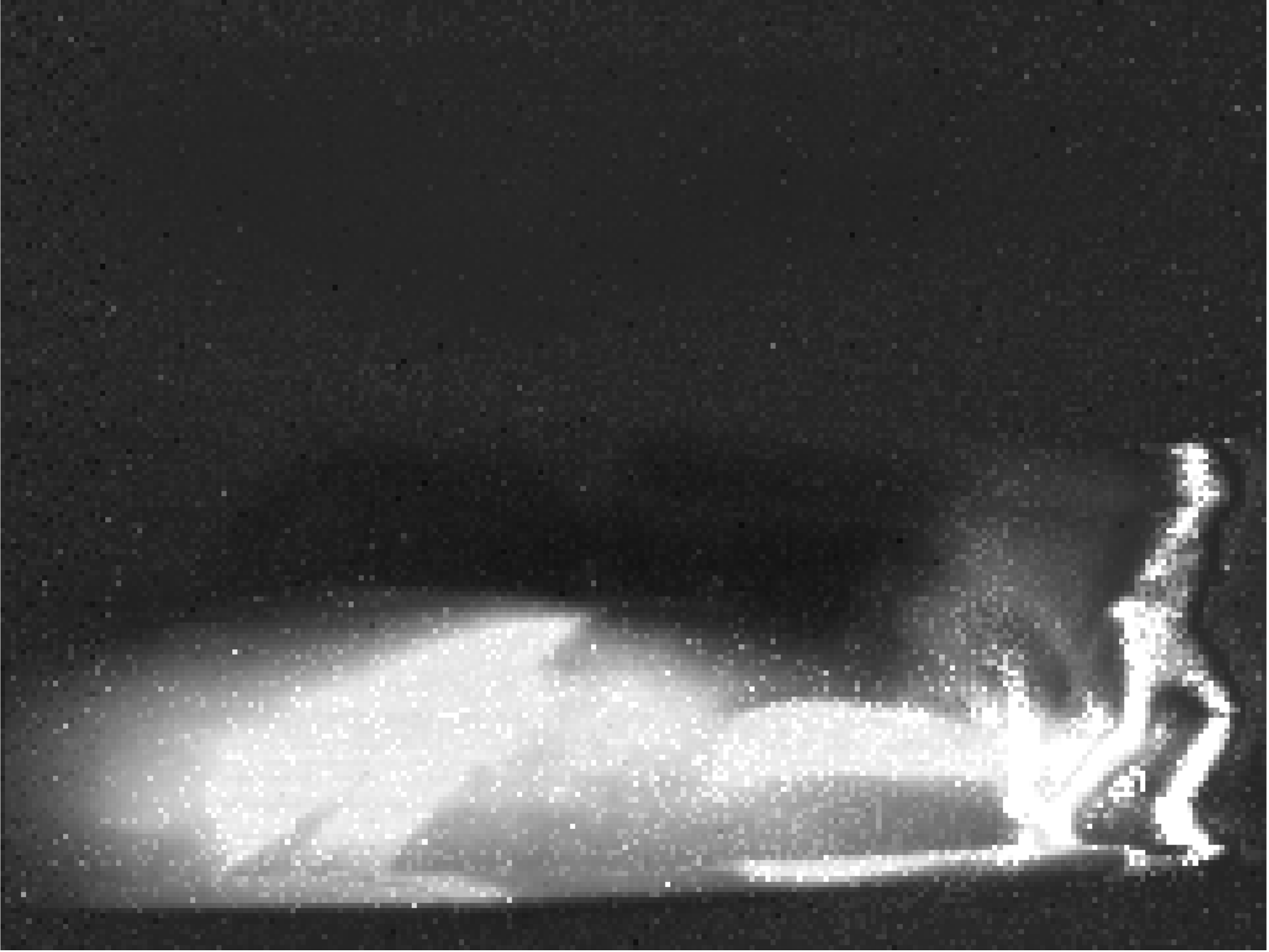}
	\caption{In this figure we see the shadowing effects that can occur due a high contrast environment. From left to right we have the standard camera image, the image produced by the mEDI method, and finally the image produced via our bilevel optimization method with $\lambda_1 = 1$ and $\lambda_2 = 1e-3$. Clearly, our method helps overcome the shadowing effect.}
	\label{f:nrun}
\end{figure}
\begin{figure}[!htb]
	\centering
	\includegraphics[width=.25\textwidth]{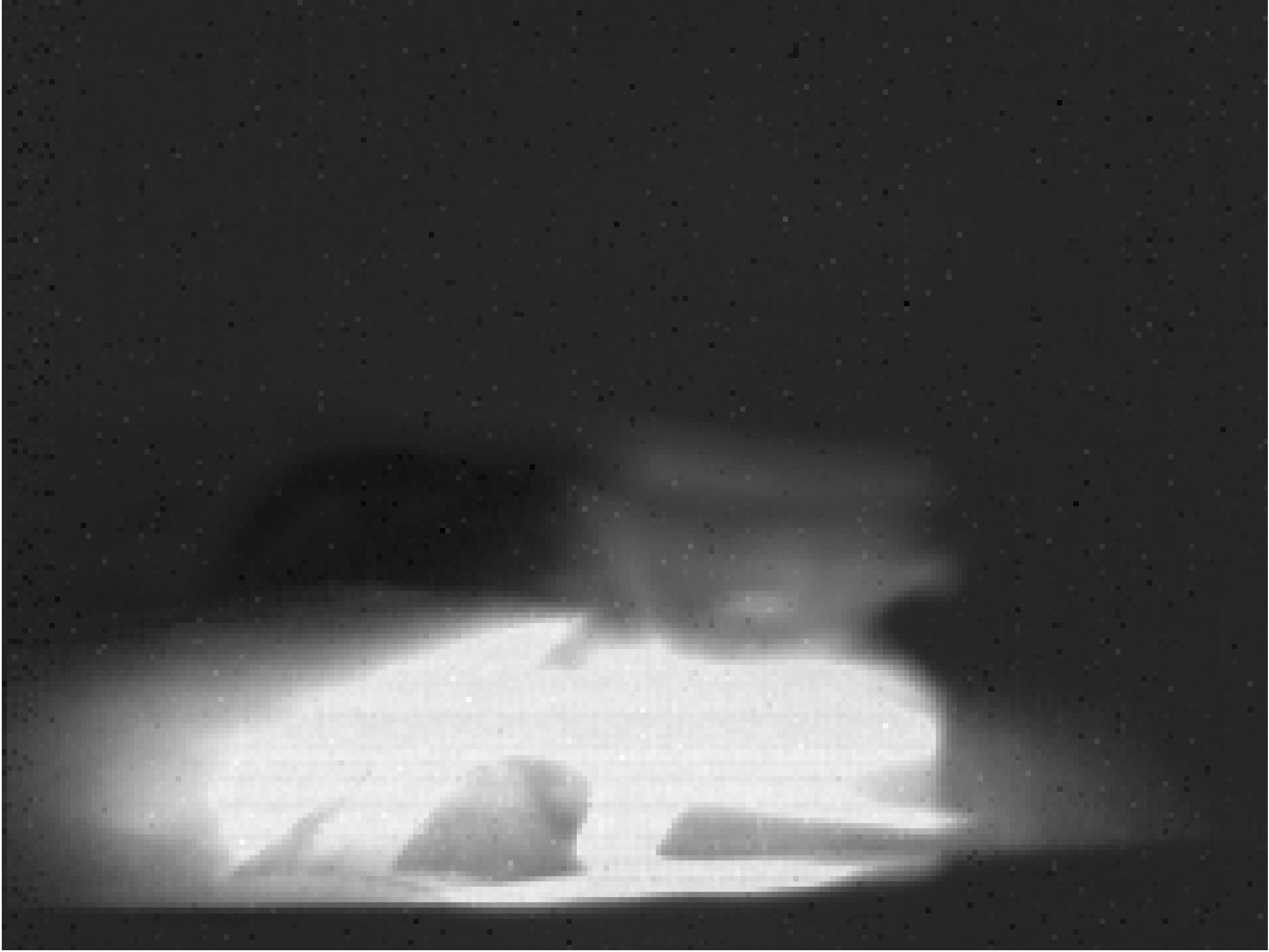} \qquad
	\includegraphics[width=.25\textwidth]{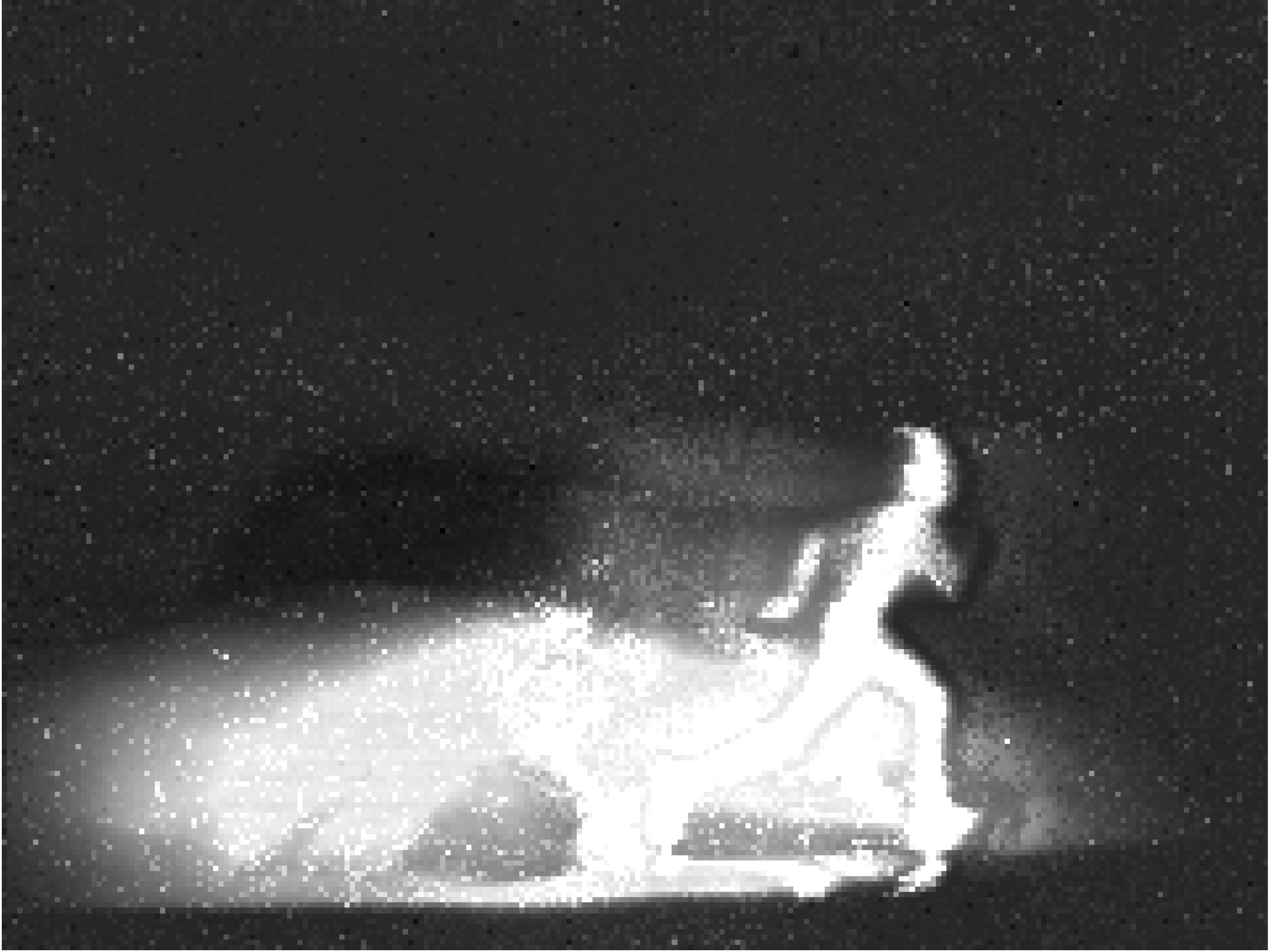} 
	\caption{The panels show a second example of our method (right) used to deblur the a standard camera image(left) resulting in a reduced amount of shadowing compared to the existing approaches.}
	\label{f:nrun2}
\end{figure}Figure~\ref{f:plot1} shows the behavior of five randomly chosen pixels. 
We observe the typical convergence behavior of a Newton's method.
\begin{figure}[!htb]
	\centering
	\includegraphics[width=.5\textwidth]{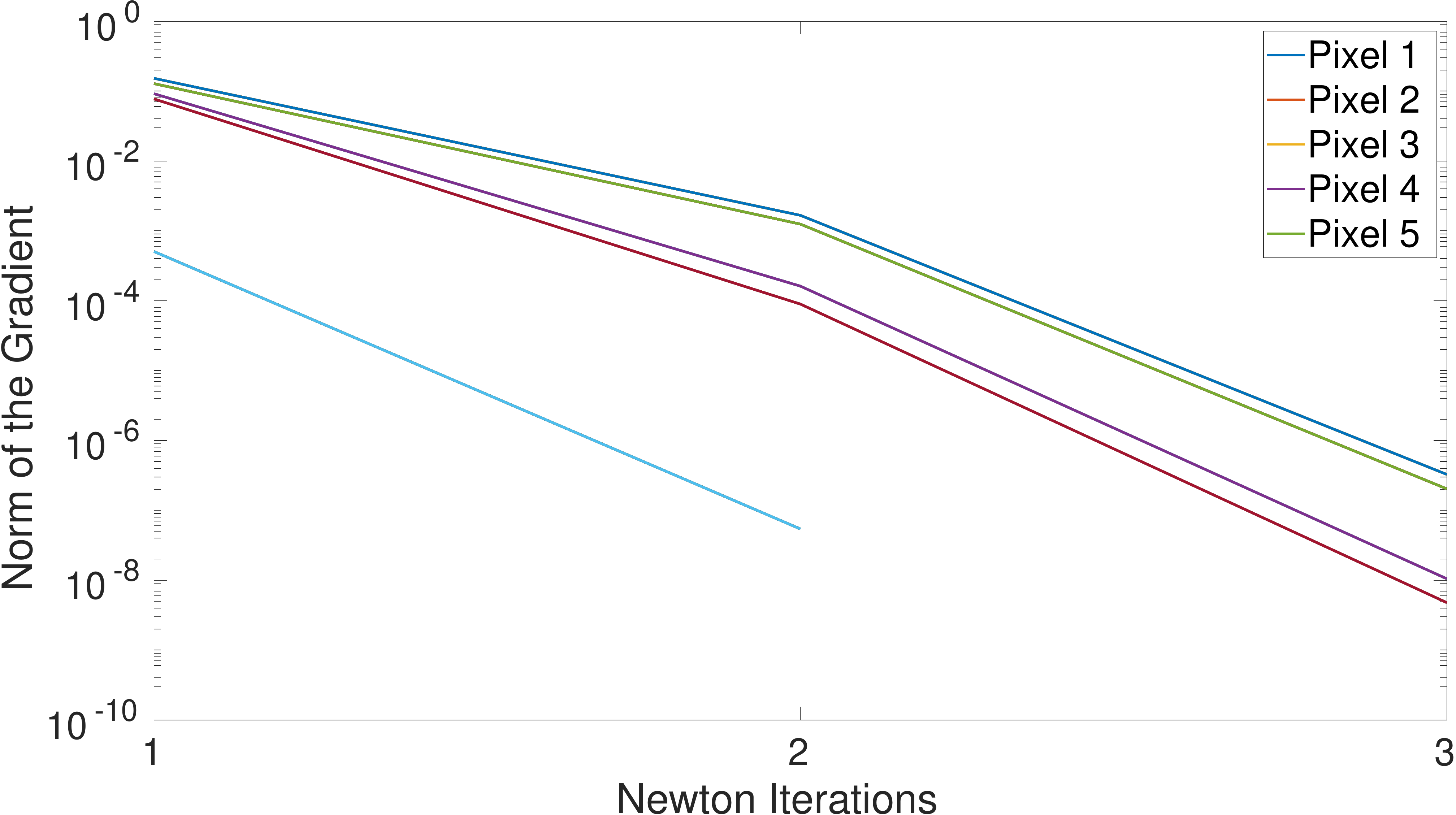}
	\caption{This figure shows the value $\|\nabla \mathcal{J}(\bm{z})\|$ for each Newton iteration over a sample of pixels. The Newton iterations shown are from the optimization of pixels sampled from the image reconstruction that is shown in Figure \ref{f:nrun}.}
	\label{f:plot1}
\end{figure}


\section*{Conclusion, Limitation, and Future Work}

In this study, we introduce a novel approach to image deblurring by combining event data and multiple standard camera images within a variational framework. Our method demonstrates the existence of solutions for a locally convex problem and we apply a second order Newton solver to several examples, showcasing the effectiveness of our approach.

Though our method does offer a systematic way to construct de-blurred images across time there are limitations that still exist in the model. 
\begin{enumerate}
	\item Our method requires a pixel by pixel optimization and depending on the camera resolution can run into scalability issues. This issue would be most evident with images where dynamics occur at a majority of pixels.
	\item Manual tuning may be required of certain parameters as described in Section \ref{s:cube} .
\end{enumerate}
To mitigate the scalability risk, we propose to filter out pixels that have significantly lower event counts compared to other pixels. In future work, we also plan to test the model's robustness in a domain shift scenario, such as a camera moving in and out of water.  Moreover, we aim to develop a parameter learning framework to optimize the regularization parameters for better performance.

\section*{Acknowledgement}

The authors are grateful to Dr. Patrick O'Neil and Dr. Diego Torrejon (BlackSky) 
and  Dr. Noor Qadri (US Naval Research Lab, Washington DC), for bringing the 
neuromorphic imaging topic to their attention and for several fruitful discussions.

\bibliographystyle{plain}
\bibliography{refs.bib}

\end{document}